\documentclass[a4paper, 10pt]{article}
\usepackage[utf8]{inputenc}
\usepackage[T1]{fontenc}
\usepackage{lmodern}
\usepackage[left=3cm,right=3cm,top=2cm,bottom=2cm]{geometry}
\usepackage{amsmath, amssymb}
\usepackage{amsthm}
\usepackage{graphicx}
\usepackage[dvipsnames]{xcolor}
\usepackage{pgf,tikz,tikz-cd}
\usetikzlibrary{arrows}
\usepackage{array}
\usepackage{verbatim}
\usepackage{multicol}
\usepackage{mdframed}
\usepackage{pstricks}
\usepackage{pstricks-add}
\usepackage{tabularx}
\usepackage{fancybox}
\usepackage{stmaryrd}
\usepackage{comment} 
\usepackage{caption}
\usepackage{mathrsfs}
\usepackage{enumitem}
\usepackage[all, cmtip]{xy}
\usepackage{pdfpages}
\usepackage{titlesec}
\usepackage{supertabular}
\usepackage{faktor}
\usepackage{todonotes}
\usepackage{quiver}

\usepackage{hyperref}
\hypersetup{
    colorlinks=true,
    linkcolor=BlueViolet,
    filecolor=magenta,      
    urlcolor=violet,
    pdftitle={},
    pdfpagemode=FullScreen,
    }

\theoremstyle{remark}
\newtheorem*{notation}{Notation}
\newtheorem{theo}{test}[section]
\newtheorem{rmk}[theo]{Remark}
\newtheorem{ex}[theo]{Example}
\theoremstyle{plain}

\theoremstyle{plain}
\usepackage{thmbox}

\newtheorem{theorem}[theo]{Theorem}
\newtheorem{prop}[theo]{Proposition}
\newtheorem{lemma}[theo]{Lemma}
\newtheorem{cor}[theo]{Corollary}
\newtheorem{definition}[theo]{Definition}

\DeclareMathOperator{\Int}{Int}
\DeclareMathOperator{\Conv}{Conv}
\DeclareMathOperator{\ord}{ord}

\newcommand{\Ocal}{\mathcal{O}}

\newcommand{\Zcal}{\mathcal{Z}}

\newcommand{\inj}{\hookrightarrow}

\newcommand{\Ncal}{\mathcal{N}}

\newcommand{\N}{\mathbb{N}}
\newcommand{\Z}{\mathbb{Z}}
\newcommand{\Q}{\mathbb{Q}}
\newcommand{\R}{\mathbb{R}}
\newcommand{\C}{\mathbb{C}}

\date{\today}
\author{François Bernard, Enrique Chávez-Martínez, Arturo E. Giles Flores}
\title{Lipschitz saturation of toric singularities in any dimension}
\begin{document}

\maketitle

\begin{abstract}
    We describe the semigroup of the Lipschitz saturation of a complex analytic toric singularity in arbitrary dimension. We give a necessary and sufficient condition for a monomial in the normalization to belong to the Lipschitz saturation, in terms of Newton polyhedra and lattice conditions, and deduce a finite algorithm to compute it. We also show that, in dimension $\geq 2$, Campillo’s notion of presaturation differs from the Lipschitz saturation, even for complex singularities.
\end{abstract}

\section*{Introduction}

 The study of Lipschitz geometry of singular spaces has undergone significant development in recent years, revealing deep connections between analytic, algebraic, and metric properties of singularities. The study of Lipschitz saturation of complex analytic varieties originates from the work of F. Pham and B. Teissier in the late 1960s \cite{PhamTeissier2020LipschitzFractions} and was inspired by Zariski’s theory of saturation, whose objective was to establish the foundations for an algebraic theory of equisingularity \cite{Zariski1968EquisingularityIII,Zariski1971SaturationI,Zariski1971SaturationII,Zariski1975SaturationIII}.\\

For a reduced analytic algebra $A$, the normalization $\overline{A}$ corresponds to the ring of germs of locally bounded meromorphic functions. The Lipschitz saturation $A^s$ is an intermediate ring formed by the germs of locally bounded Lipschitz meromorphic functions, and these can be defined via the integral closure of a certain ideal. In this setting, the ring $A^s$ is again an analytic algebra which in some cases coincides with Zariski's algebraic saturation. \cite{PhamTeissier2020LipschitzFractions, FGST2020LipschitzAnAlgebraicApproch}.This algebraic description opened the way to define the relative Lipschitz saturation of rings in a more general context \cite{Lipman1975}, and we should say  it has recently found renewed interest as shown by the papers \cite{ Bernard2025RelativeLipSat,DaSilvaRibeiro2023,DaSilvaNetto2024SurveyLipSat}.

In the case of analytic algebras, beyond the algebraic relevance of the construction, the associated germ $(X^s,0)$ is important in the study of bi-Lipschitz equisingularity. While this construction is well understood in the case of complex curve singularities (1-dimensional reduced analytic algebras \cite{BrianconGallicoGranger1980DeformationsEqu,FGST2020LipschitzAnAlgebraicApproch}), where it admits an explicit and algorithmic description in terms of semigroups, its higher-dimensional counterpart remains largely unexplored and poorly understood. Recently, in \cite{DuarteGiles2023ToricLipSat} the authors prove that for a toric singularity with smooth normalization, the Lipschitz saturation is again toric, but no general description of its semigroup is given.

The goal of this paper is to provide such a description. More precisely, we give an explicit and computable characterization of the semigroup associated with the Lipschitz saturation of a toric singularity with smooth normalization. Our approach combines several key ingredients: a reinterpretation of Campillo’s construction \cite{Campillo1983OnSatOfCurveSing}, a detailed analysis of arcs via the valuative criterion, and a systematic use of Newton polyhedra and support functions.\\

An important role is played by what we call Campillo’s criterion, which provides a practical method to construct elements in the Lipschitz saturation by adjoining specific fractions from the normalization. While this criterion is known to be sufficient in general and complete in dimension one, we show how it can be effectively exploited in higher dimensions when combined with convex-geometric techniques. In section \ref{SecCampillo} we recall this criterion and translate it to the toric case via the language of the associated semigroups.
Our main result is Theorem \ref{Th_Main} which gives a necessary and sufficient condition for a monomial to belong to the Lipschitz saturation in terms of its position with respect to certain Newton polyhedra and lattice conditions. As such, section \ref{Section_CriterionForNotinLipSat} is dedicated to develop a criterion that determines which monomials are not in the Lipschitz saturation. In section \ref{SecMain}, combining Campillo's criterion and a generalization of the techniques used in \cite{BrianconGallicoGranger1980DeformationsEqu} to study the 1-dimensional case we obtain a characterization of the monomials which do belong to the Lipschitz saturation.  This leads to a finite and constructive procedure to determine the saturated semigroup.
In addition, we exhibit examples showing that, in contrast with the curve case, Campillo’s saturation does not coincide with the Lipschitz saturation in higher dimension. This highlights new phenomena specific to the multidimensional setting and highlights the subtlety of Lipschitz conditions beyond the one-dimensional case. \\ 

On the occasion of his 80th birthday, we dedicate this work to Bernard Teissier with deep admiration and affection. By laying the algebraic foundations of Lipschitz geometry, he charted the very paths we explore here today. We hope this work reflects a small fraction of the inspiration, rigor, and joy he has brought to singularity theory. 

\paragraph{Acknowledgement}
The first author was partially supported by Plan d’investissements
France 2030, IDEX UP ANR-18-IDEX-000. The second and third-named authors were supported by SECIHTI project CF-2023-G-33.

\section{Campillo's criterion}\label{SecCampillo}

Let $R$ be a ring and $A$ be an $R$-algebra. We denote by $\kappa(A)$ its total ring of fractions and by $\overline{A}$ its normalization, i.e. the integral closure of $A$ in $\kappa(A)$. For an element $a\in A$, we will write $\Delta(a) := a\otimes 1- 1\otimes a \in A\otimes_R A$. If $A \inj B$ is an extension of $R$-algebras, we write $I_{\Delta} := \langle \Delta(a) \mid a\in A \rangle \subset B\otimes_R B$. The Lipschitz saturation $A^s_B$ of $A$ in $B$ is defined by 
\[
A^s_B := \Big\{ b\in B \mid \Delta(b)\in \overline{I_{\Delta}} \Big\}
\]
where $\overline{I_{\Delta}}$ is the integral closure of the ideal $I_{\Delta}$. As shown in \cite{Bernard2025RelativeLipSat}, the extension $A\inj A^s_B$ is not finite in general, however the classical use is to consider the Lipschitz saturation of $A$ in $\Bar{A}$. Indeed, usually $A$ is a local ring of germ of holomorphic fonction $\Ocal_{X,0}$ and since locally Lipschitz functions must at least be locally bounded, this naturally leads one to work inside the normalization.  As stated in the introduction practically the only case where one knows how to effectively compute the Lipschitz saturation is the case of complex curves. Indeed, in this case, the Lipschitz saturation is the toric curve whose semigroup is obtained by the algorithmic procedure described in \cite{BrianconGallicoGranger1980DeformationsEqu} or in \cite{FGST2020LipschitzAnAlgebraicApproch}. In order to better understand the Lipschitz saturation of curves in positive characteristic, Campillo reinterpreted this construction as the process of adjoining to $A$ certain specific elements of the normalization $\Bar{A}$. The ring obtained by adjoining all such elements is what he introduced under the name "presaturation" in \cite{Campillo1983OnSatOfCurveSing} and \cite{Campillo1988ArithmeticalAspects}. He proved in \cite[Proposition 1.1]{Campillo1983OnSatOfCurveSing} that this presaturation is always contained in the Lipschitz saturation. Since our goal is to compute Lipschitz saturations and this result provides a method to construct elements belonging to the Lipschitz saturation, we refer to it as "Campillo’s criterion".

We begin by presenting a slightly more detailed version of the proof given in \cite[Proposition 1.1]{Campillo1983OnSatOfCurveSing}. We state it for general extensions $A\inj B$, as this broader setting may be of interest, since the notion of (relative) Lipschitz saturation has also been studied in the context of commutative algebra, for instance in \cite{Lipman1975,DaSilvaNetto2024SurveyLipSat,DaSilvaRibeiro2023, Bernard2025RelativeLipSat}.

\begin{prop}[Campillo's criterion]\label{Prop_CampilloSat}
    Let $A\inj B\inj \kappa(A)$ be an extension of $R$-algebras. Let $p\in A$ and $p_1,\dots, p_r, q_1, \dots q_s \in A^{\times}$ such that 
    $$ \frac{p}{p_i},\text{ }\frac{p}{q_i},\text{ }\frac{p_1\dots p_r}{q_1\dots q_s}\in B $$
    Then $p(p_1\dots p_r)/(q_1\dots q_s) \in A^s_B$.
\end{prop}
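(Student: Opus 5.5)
The plan is to expand $\Delta(b)$, for $b:=p\,P/Q$ with $P:=p_1\cdots p_r$ and $Q:=q_1\cdots q_s$, as a combination of the generators $\Delta(p),\Delta(p_i),\Delta(q_j)$ of $I_\Delta$. Then I will show that the coefficients, possibly after raising to a power, lie in $B\otimes_R B$.

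I read $A^\times$ as the non-zero-divisors, so that all the fractions make sense in $\kappa(A)$. I set
$$x_i:=p/p_i,\qquad y_j:=p/q_j,\qquad u:=P/Q,$$
all of which lie in $B$ by hypothesis.

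\textbf{Step 1 (formal expansion).} I work in the ring $\kappa(A)\otimes_R\kappa(A)$ and use two rules:
$$\Delta(xy)=\Delta(x)(y\otimes 1)+(1\otimes x)\Delta(y),\qquad \Delta(1/q)=-\Delta(q)\,(q\otimes q)^{-1}.$$
Applying them, I expand $\Delta(b)=\Delta\bigl(p\cdot p_1\cdots p_r\cdot q_1^{-1}\cdots q_s^{-1}\bigr)$ telescopically into a sum of $1+r+s$ terms. Each term has the form (generator of $I_\Delta$)$\,\cdot\, c$, with the following properties.

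\begin{itemize}
\item The coefficient of $\Delta(p)$ is $u\otimes 1\in B\otimes B$, so this term already lies in $I_\Delta(B\otimes B)$.
\item The coefficient of $\Delta(p_i)$ is $(1\otimes p)$ times a product of the $p_k\otimes1$ and $1\otimes p_k$ over $k\neq i$, divided by $Q\otimes1$ or $1\otimes Q$ (depending on the ordering).
\item The coefficients of the $\Delta(q_j)$ are of the same form, with an extra $q_j$ in the denominator.
\end{itemize}

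The key point is that each coefficient contains exactly one factor $p$ (on one side) together with the denominators. I will rewrite each such coefficient as a product of elements of the form $x_i\otimes1$, $1\otimes x_i$, $y_j\otimes 1$, $1\otimes y_j$, $u\otimes 1$, $1\otimes u$ and their $P$- and $Q$-parts.

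\textbf{Step 2 (balancing the factors of $p$).} This is the step I expect to be the main obstacle. A coefficient like $(1\otimes p)\,(p_1\cdots p_{i-1}\otimes p_{i+1}\cdots p_r)/(Q\otimes 1)$ is not obviously in $B\otimes B$. The reason is that the single available $p$ sits on one side, while the denominator $Q$ may sit on the other side.

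\begin{itemize}
\item \emph{First attempt.} I will choose the telescoping order separately for the $p$-part and the $q$-part, so that the denominators land on the same side as $p$. For example, I group as $(p\otimes1)\cdot(\text{stuff}\otimes 1)/(q_j\otimes 1)=y_j\otimes(\dots)$, and use $u\otimes 1$ or $1\otimes u$ to absorb $P/Q$ wholesale. A single $p$ can only cancel a single denominator $p_i$ or $q_j$. Hence I expect to exploit the fact that $u$ absorbs all the remaining ones at once: each term should become $\Delta(\cdot)$ times an element such as $(y_j\otimes 1)(1\otimes u)$ or $(x_i\otimes1)(u\otimes 1)\cdots$.
\item \emph{Fallback.} If some term refuses to balance, I will instead prove an integral dependence relation directly. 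Concretely, I will show $\Delta(b)^{N}\in I_\Delta^{N}(B\otimes B)$ for a suitable $N$, say $N=2$ or $N=r+s+1$. The extra copies of $\Delta(b)=b\otimes1-1\otimes b$ supply additional factors $p\otimes1$ and $1\otimes p$ on both sides, and these are exactly what is needed to cancel the remaining denominators via $x_i,y_j\in B$. Such a relation is an equation of integral dependence of $\Delta(b)$ over $I_\Delta$.
\end{itemize}

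\textbf{Step 3 (conclusion).} Once every term lies in $I_\Delta(B\otimes B)$, or its $N$-th power lies in $I_\Delta^N(B\otimes B)$, I obtain $\Delta(b)\in\overline{I_\Delta}$. I must also check that the identity computed in $\kappa(A)\otimes\kappa(A)$ really holds in $B\otimes B$. I will ensure this by writing every term as a polynomial expression in elements of $B$ before clearing denominators, so that no localization of the tensor product is needed. This gives $b\in A^s_B$.
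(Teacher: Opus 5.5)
There is a genuine gap, and it is in Step~2, where you expected it: the balancing is never carried out, and your first attempt cannot be carried out in general.

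\textbf{Where the direct approach works and where it fails.} Your direct approach does succeed when $p/Q\in B$ (for instance when $r\le 1$ or $s\le 1$). In that case
\[
\Delta(b)=(u\otimes 1)\Delta(p)+\Big(1\otimes \tfrac{p}{Q}\Big)\Delta(P)-\Big(u\otimes \tfrac{p}{Q}\Big)\Delta(Q),
\]
and all coefficients lie in $B\otimes B$. In general, however, $\Delta(b)\notin I_\Delta(B\otimes_R B)$. So no choice of telescoping order, and no other rearrangement, can place every coefficient in $B\otimes B$.

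Here is a counterexample. Take $R=k$ a field, $r=s=2$, $A=k[p,p_1,p_2,q_1,q_2]$ a polynomial ring, and $B=A[p/p_1,p/p_2,p/q_1,p/q_2,p_1p_2/(q_1q_2)]$.
\begin{itemize}
\item $B\otimes_k B$ is a semigroup ring, and $I_\Delta(B\otimes B)$ is generated by the binomials $\Delta(p),\Delta(p_i),\Delta(q_j)$.
\item This ideal is homogeneous for the grading $\deg(c\otimes c')=\deg c+\deg c'$.
\item The degree-$\deg b$ piece has a basis of $20$ monomials $c\otimes c'$ with $cc'=b$.
\item Monomial multiples of the generators only relate monomials inside one of two classes. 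With $g$ running over $p_1,p_2,q_1,q_2$, these are $\{b\otimes 1,\ \tfrac{b}{g}\otimes g,\ u\otimes p,\ ug\otimes \tfrac{p}{g}\}$ and $\{1\otimes b,\ g\otimes \tfrac{b}{g},\ p\otimes u,\ \tfrac{p}{g}\otimes ug\}$.
\end{itemize}
Hence $b\otimes 1-1\otimes b\notin I_\Delta(B\otimes B)$, even though $b\in A^s_B$. The integral closure is genuinely needed.

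Your fallback does not close the gap. $\Delta(b)$ is a difference, not a product, so its powers do not supply factors $p\otimes 1$ or $1\otimes p$ that cancel denominators. You also give no reason why a relation of the special form $\Delta(b)^N\in I_\Delta^N(B\otimes B)$ should hold.

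\textbf{The missing idea.} The right telescoping order depends on whether $p_i\otimes 1$ divides $1\otimes p_i$ or the reverse, and likewise for $Q$. That comparison cannot be made in $B\otimes B$; it can only be made after passing to a valuation ring. The paper uses the valuative criterion as follows.
\begin{itemize}
\item Fix a valuation ring $V$ between $B\otimes_R B$ and its total ring of fractions.
\item Using the symmetry $\Delta(b)\mapsto-\Delta(b)$, assume $Q\otimes 1$ divides $1\otimes Q$ in $V$.
\item Multiply your Step~1 expansion by $Q\otimes Q$, so that all coefficients lie in $A\otimes A$.
\item Order the $p_i$, and separately the $q_j$, according to which of the two divisibilities holds in $V$.
\end{itemize}
Each coefficient then becomes $Q\otimes Q$ times a product of three factors:
\begin{itemize}
\item $1\otimes \tfrac{p}{p_i}$ or $1\otimes \tfrac{p}{q_j}$;
\item $u\otimes 1$ or $1\otimes u$;
\item quotients such as $(1\otimes p_1\cdots p_i)/(p_1\cdots p_i\otimes 1)$.
\end{itemize}
These quotients lie in $V$ but not in $B\otimes B$. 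This gives $\Delta(b)\in I_\Delta V$ for every such $V$, hence $\Delta(b)\in\overline{I_\Delta}$. The per-valuation choice of order is exactly what replaces your global Step~2.
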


\begin{proof}
    Let $p,p_i$ and $q_j$ be as in the assumption of the proposition. We write $w := p(p_1\dots p_r)/(q_1\dots q_s)$. We want to show that $\Delta(w)\in \overline{I_\Delta}$. By the valuative criterion (see \cite{SwansonHuneke2006IntegralClosure} Theorem 6.8.3), it is enough to show that $\Delta(w) \in I_\Delta.V$ for every valuation ring $V$ lying between $B\otimes_R B$ and its total ring of fractions. Let $V$ be such a ring, and let us suppose for now that there exists $a\in V$ such that $(1\otimes q_1\dots q_s) = a(q_1\dots q_s\otimes 1)$.
    
    We consider the following identity
    \begin{equation}
    \begin{split}
        \Delta(w)(q_1\dots q_s\otimes q_1\dots q_s) &= \Delta(p)(p_1\dots p_r\otimes q_1\dots q_s) \\
        &+ \sum_{i=1}^r \Delta(p_i)(p_{i+1}\dots p_r\otimes p p_1\dots p_{i-1} q_1\dots q_s)\\
        &- \sum_{j=1}^s \Delta(q_j)(q_{j+1}\dots q_s\otimes p p_1\dots p_r q_1\dots q_{j-1}) 
    \end{split}
    \end{equation}

    In the ring $V$, we can reorganize the $p_i$ in such a way that there exists $i_0\in \{1,\dots,r\}$ and $(a_i)_{1,\dots,r}\in V^r$ satisfying $1\otimes p_i = a_i(p_i\otimes 1)$ for every $i\leq i_0$ and $p_i\otimes 1 = a_i(1\otimes p_i)$ for every $i>i_0$.
    
    Let $i\leq i_0$, then
    \begin{equation*}
    \begin{split}
        (p_{i+1}\dots p_r\otimes p p_1\dots p_{i-1} q_1\dots q_s) & = (p_{i+1}\dots p_r\otimes p_1\dots p_i q_1\dots q_s)(1\otimes \frac{p}{p_i})\\
        & = (p_1\dots p_r\otimes q_1\dots q_s)(a_1\dots a_i)(1\otimes \frac{p}{p_i})
    \end{split}
    \end{equation*}
    and $(a_1\dots a_i)(1\otimes \frac{p}{p_i}) \in V$ by assumption. Now if $i>i_0$, then 
    \begin{equation*}
    \begin{split}
        (p_{i+1}\dots p_r\otimes p p_1\dots p_{i-1} q_1\dots q_s) &= (1\otimes p_1\dots p_rq_1\dots q_s)(a_{i+1}\dots a_r)(1\otimes \frac{p}{p_i})\\
        &= (q_1\dots q_s\otimes p_1\dots p_r)a(a_{i+1}\dots a_r)(1\otimes \frac{p}{p_i}).
    \end{split}
    \end{equation*}
    By using the same trick on the $q_j$, one can show that $(q_1\dots q_s\otimes p_1\dots p_r)$ divides all the coefficients in front of the $\Delta(q_j)$ in the expression $(1)$. Since, by assumption, $(p_1\dots p_r)/(q_1\dots q_s) \in B$, we get that every coefficient in $(1)$ is divisible by $(q_1\dots q_s\otimes q_1\dots q_s)$ in $V$. Hence, we can write
    $$\Delta(w) = A_0.\Delta(p) + \sum_{i=1}^r B_i\Delta(p_i) - \sum_{j=1}^s C_j\Delta(q_j)$$
    with $A_0, B_i, C_j \in V$, so we get $\Delta(w) \in I_\Delta.V$. However, we had supposed that ${(q_1\dots q_s\otimes 1)\mid (1\otimes q_1\dots q_s)}$ in $V$. If it is not the case, then $(1\otimes q_1\dots q_s) \mid (q_1\dots q_s\otimes 1)$ because $V$ is a valuation ring. Then we use identity (1) on the symmetric expression ${(1\otimes w - w\otimes 1)}{(q_1\dots q_s\otimes q_1\dots q_s)}= -\Delta(w)(q_1\dots q_s\otimes q_1\dots q_s)$. This gives us the same formula as $(1)$  but with symmetric coefficients. Hence, our previous arguments apply symmetrically, and we get that $\Delta(w)\in I_\Delta.V$ for every valuation ring between $B\otimes_R B$ and its total ring of fractions, i.e. $\Delta(w) \in \overline{I_\Delta}$.
\end{proof}

In the remainder of this section, we explain how Campillo's criterion can be used to advantage in the study of semigroups of toric singularities. Let $\Gamma \subset \Z^d$ be an affine semigroup and let $\C\{\Gamma\}$ denotes the ring of convergent series in a neighborhood of $0\in X$ with exponents in $\Gamma$.

\begin{definition}
    Let $(X,0)$ be a germ of a complex analytic space. We say that $(X,0)$ is a germ of a toric singularity if there exists a finitely generated semigroup $\Gamma\subset \Z^d$ contained in a strongly convex cone such that $\Ocal_{X,0} \simeq \C\{\Gamma\}$. Equivalently, let $X_\Gamma$ be the affine toric variety defined by $\Gamma$. Then $(X,0)$ is a toric singularity if it is isomorphic, as germs, to $(X_\Gamma,0)$.
\end{definition}

Let $\sigma$ be a cone such that $\sigma^\vee = \Gamma_\R$. Recall that $\Ocal_{\Bar{X},0} \simeq \C\{\overline{\Gamma}\}$ where $\overline{\Gamma} = \sigma^\vee \cap \Z^d$. We denote by $\Gamma^s$ the sub-semigroup of $\overline{\Gamma}$ of elements whose corresponding monomials belong to $\Ocal_{X^s}$. If $(X,0)$ is a toric singularity with smooth normalization, it is shown in \cite{DuarteGiles2023ToricLipSat} that the Lipschitz saturation $(X^s,0)$ is again toric. In that case $\Ocal_{X^s,0} \simeq \C\{\Gamma^s\}$.


\begin{rmk}\label{Rmk_CampCriterionForSemigroups}
If we apply Campillo's criterion on monomial elements of $\Ocal_{X,0}$, it translates on semigroups in the following way:\\
Let $m, k_1,\dots, k_r, l_1,\dots l_s\in \Gamma$, such that
\[
m-k_i\in \overline{\Gamma},\text{ }m-l_j\in \overline{\Gamma},\text{ and }\sum_i k_i - \sum_j l_j \in \overline{\Gamma}
\]
then $m + \sum_i k_i - \sum_j l_j \in \Gamma^s$. 
\end{rmk}

This gives us a way to produce elements of the Lipschitz saturation of an affine semigroup.
\begin{prop}\label{Prop_mPlusGamma_m}
Let $m\in \Gamma$ and let $\Gamma_m :=\Z\big[(m-\sigma^\vee)\cap \Gamma\big] \cap \sigma^\vee$. Then
\[
 m + \Gamma_m \subset \Gamma^s
\]
\end{prop}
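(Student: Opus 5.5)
The plan is to apply the semigroup version of Campillo's criterion (Remark \ref{Rmk_CampCriterionForSemigroups}) directly. Take $u\in\Gamma_m$. By definition $u\in\sigma^\vee$, and $u$ is an integer combination of elements of $(m-\sigma^\vee)\cap\Gamma$. Grouping the positive and negative coefficients, and repeating elements according to their multiplicities, we can write
\[
u=\sum_{i=1}^r k_i-\sum_{j=1}^s l_j,
\]
where every $k_i$ and every $l_j$ lies in $(m-\sigma^\vee)\cap\Gamma$. Here $r,s\geq 0$, and empty sums are allowed.

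Next we check the three hypotheses of the Remark for $m,k_1,\dots,k_r,l_1,\dots,l_s\in\Gamma$.
\begin{itemize}
\item Since $k_i\in m-\sigma^\vee$, we have $m-k_i\in\sigma^\vee$. Moreover $m-k_i\in\Z^d$, because $m$ and $k_i$ are lattice points. Hence $m-k_i\in\sigma^\vee\cap\Z^d=\overline{\Gamma}$.
\item The same argument gives $m-l_j\in\overline{\Gamma}$.
\item $\sum_i k_i-\sum_j l_j=u$ lies in $\sigma^\vee\cap\Z^d=\overline{\Gamma}$, since $u\in\Gamma_m\subset\Z^d\cap\sigma^\vee$.
\end{itemize}
The Remark then gives $m+u\in\Gamma^s$. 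Since $u\in\Gamma_m$ was arbitrary, this proves $m+\Gamma_m\subset\Gamma^s$.

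The only real subtlety is that the Remark comes from Proposition \ref{Prop_CampilloSat}, which asks the $p_i,q_j$ to be units $A^\times$ in the ring. In the monomial setting this must be read as allowing division in the total ring of fractions, i.e. monomials are nonzerodivisors. I will take this as given, as the Remark does.

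I also need to handle the degenerate cases where $r=0$ or $s=0$, i.e. where an empty product equals $1$. These are harmless: the identity (1) in the proof of Proposition \ref{Prop_CampilloSat} still holds with empty sums. If $u=0$, then $m\in\Gamma\subset\Gamma^s$ directly.
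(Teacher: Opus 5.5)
Your proposal is correct and follows the paper's proof essentially verbatim. Like the paper, you write an element of $\Gamma_m$ as $\sum_i k_i - \sum_j l_j$ with $k_i, l_j \in (m-\sigma^\vee)\cap\Gamma$, note that $m-k_i,\ m-l_j \in \sigma^\vee\cap\Z^d = \overline{\Gamma}$ and that the difference lies in $\overline{\Gamma}$, and then invoke Remark \ref{Rmk_CampCriterionForSemigroups}; your added comments on reading $A^\times$ as nonzerodivisors and on the degenerate cases $r=0$, $s=0$, $u=0$ are sensible clarifications that the paper leaves implicit.
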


\begin{proof}
Let $n \in \Z[(m-\sigma^\vee)\cap \Gamma]$. Then, there exists some $k_i \in (m-\sigma^\vee)\cap \Gamma$ such that $n = \sum_i k_i - \sum_j l_j$. For each $i$, we have $k_i\in (m-\sigma^\vee)$ and so $m-k_i \in\sigma^\vee$. Hence $m-k_i \in\sigma^\vee \cap \Z^d = \overline{\Gamma}$ and the same is true for every $l_j$. Now, if we suppose $n\in \sigma^\vee$, then, by remark \ref{Rmk_CampCriterionForSemigroups}, we get $m+n \in \Gamma^s$.
\end{proof}

\begin{ex}
    Let us consider the affine semigroup $\Gamma$ in $\Z^2$ generated by the elements $(0,3)$, $(0,4)$, $(0,5)$, $(2,0)$, $(3,0)$, $(1,1)$ and $(1,4)$. As an example, let us consider $m = (3,1)$. We then have $(m-\sigma^\vee)\cap \Gamma = \big\{ (0,0), (1,1), (2,0), (3,0), (3,1) \big\}$ and so $\Z\big[(m-\sigma^\vee)\cap \Gamma\big] = \Z^2$. Hence $\Gamma_m = \Z^2 \cap \R_{\geq 0} = \N^2$ and we get that $(3,2) \in \Gamma^s$ because $(3,2) \in m+\Gamma_m = (3,1)+\N^2$.

    \begin{figure}[h]
    \centering
    \begin{tabular}{ccc}

    \begin{tikzpicture}[scale=0.8]
    
        \foreach \j in {0,...,4}{%
            \foreach \i in {0,...,5}{%
            \node at (\i, \j) {$\bullet$};
            }
        }
    
    
    \node at (0, 1) {{\color{white}$\bullet$}};
    \node at (0, 1) {$\times$};
    \node at (0, 2) {{\color{white}$\bullet$}};
    \node at (0, 2) {$\times$};
    \node at (1, 0) {{\color{white}$\bullet$}};
    \node at (1, 0) {$\times$};
    \node at (1, 2) {{\color{white}$\bullet$}};
    \node at (1, 2) {$\times$};
    \node at (2, 1) {{\color{white}$\bullet$}};
    \node at (2, 1) {$\times$};
    \node at (3, 2) {{\color{white}$\bullet$}};
    \node at (3, 2) {$\times$};
    
    \draw[-] (0,0) -- (5.3,0.01);
    \draw[-] (0,0) -- (0,4.3);

    \draw[color = blue] (3,1) -- (-0.2,1);
    \draw[color = blue] (3,1) -- (3,-0.2);
    \filldraw [fill=blue, fill opacity=0.1, draw = blue] (3,1) rectangle (0,0);
    \node[color = blue] at (3.2,1.2) {$m$};
    
    \end{tikzpicture}

&
    \begin{tikzpicture}[scale=0.5]
        \foreach \j in {-2,...,3}{%
        \node at (1, \j) {};
        }

    \draw[->] (0,0) to [bend left=20] (3,0);
    \end{tikzpicture}
&

    \begin{tikzpicture}[scale=0.8]
    
        \foreach \j in {1,...,4}{%
            \foreach \i in {3,...,5}{%
            \node at (\i, \j) {{\color{blue}$\bullet$}};
            }
        }

        \foreach \j in {0,...,4}{%
            \foreach \i in {0,...,2}{%
            \node at (\i, \j) {$\bullet$};
            }
        }
        
    \node at (3, 0) {$\bullet$};
    \node at (4, 0) {$\bullet$};
    \node at (5, 0) {$\bullet$};

    \node at (0, 1) {{\color{white}$\bullet$}};
    \node at (0, 1) {$\times$};
    \node at (0, 2) {{\color{white}$\bullet$}};
    \node at (0, 2) {$\times$};
    \node at (1, 0) {{\color{white}$\bullet$}};
    \node at (1, 0) {$\times$};
    \node at (1, 2) {{\color{white}$\bullet$}};
    \node at (1, 2) {$\times$};
    \node at (2, 1) {{\color{white}$\bullet$}};
    \node at (2, 1) {$\times$};
    
    \draw[-] (0,0) -- (5.3,0.01);
    \draw[-] (0,0) -- (0,4.3);

    \draw[color = blue] (3,1) -- (5.2,1);
    \draw[color = blue] (3,1) -- (3,4.2);
    \node[color = blue] at (3.3,1.25) {$m$};
    
    \end{tikzpicture}\\

    {\color{blue} $(m-\sigma^\vee)\cap \Gamma$} & & $\Gamma +$ {\color{blue} $m+\Gamma_m$}

\end{tabular}

    \caption{Illustration of Proposition \ref{Prop_mPlusGamma_m}}
    \label{fig:mPlusGamma_m}

\end{figure}
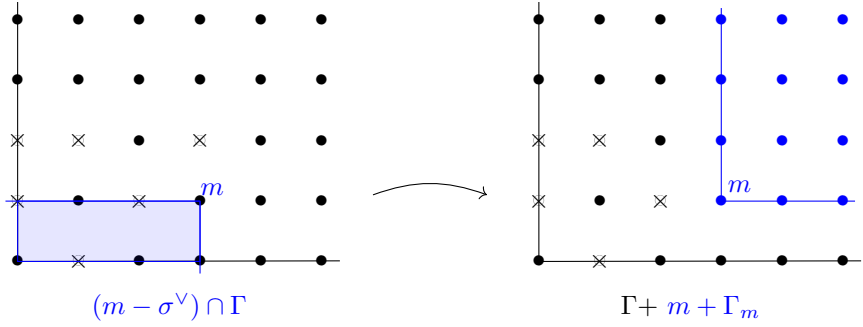
\end{ex}

\begin{rmk}\label{Rmk_finitenessAlgo}
Let $A \inj B$ be a finite extension of rings with $A$ a Noetherian ring. Then one can show that the process of adding elements $b_i \in B$ with $b_{i+1} \notin A[b_1,...,b_i]$ ends after a finite number of steps. This means that the process of adding elements of the form presented in Remark \ref{Rmk_CampCriterionForSemigroups} will end after a finite number of steps. In particular, if $\Gamma$ is an affine semigroup and $X$ is the toric singularity associated to $\Gamma$, then adding elements of $\Gamma^s$ to $\Gamma$ consists in adding monomials elements of $\Ocal_{\Bar{X},0}$ to $\Ocal_{X,0}$. Since $\Ocal_{X,0} \inj \Ocal_{\Bar{X},0}$ is a finite extension, this means that the process of adding elements of the form presented in Remark \ref{Rmk_CampCriterionForSemigroups} will end after a finite number of steps. Since $\Gamma$ is finitely generated, this will produce an affine semigroup and so an affine toric variety.
\end{rmk}

\begin{rmk}
    A simple remark is that if $m\in \Gamma$ and $m'\in \overline{\Gamma} = M\cap \sigma^\vee$ are such that $|m'|\leq |m|$ (for the Euclidean distance), then $m'\notin m+\Gamma_m$. This means that, for any $m\in \Gamma$, the set $(\Gamma + m + \Gamma_m)\setminus \Gamma$ contains no elements smaller than $m$. Therefore, we can add the elements described in Proposition \ref{Prop_mPlusGamma_m} starting from the smallest ones and proceeding according to the order induced by the Euclidean norm. By Remark \ref{Rmk_finitenessAlgo}, this process terminates.
\end{rmk}

\subsection{Newton polyhedrons and support functions}

We now turn to an important tool for describing the semigroup of the Lipschitz saturation: the link between Newton polyhedra and support functions. It is not surprising that Newton polyhedra naturally arise in our study, as they are closely related to the integral closure of monomial ideals.

Let us begin by recalling the correspondence between convex sets and their support functions.

\begin{lemma}[\cite{Rockafellar1997ConvexAnalysis}, Section 13]\label{Lem_ConvexSetSuppFunction}
    Let $C$ be a closed convex set of $\R^d$. Then 
    \[
        C = \Big\{ q\in \R^d \mid \forall v\in \R^d\text{ }\langle q,v \rangle \geq \inf_{p\in C}\{ \langle p ,v\rangle \}\Big\}
    \]
\end{lemma}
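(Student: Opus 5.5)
The plan is to prove the two inclusions separately and to get the nontrivial one from a separating hyperplane. Write
\[
C' := \Big\{ q\in \R^d \mid \forall v\in \R^d\ \langle q,v \rangle \geq h_C(v)\Big\},\qquad h_C(v):=\inf_{p\in C}\langle p,v\rangle \in \R\cup\{-\infty\}.
\]
If $C$ is empty, then $h_C\equiv +\infty$ and $C'$ is empty as well. So I will assume $C\neq\emptyset$.

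\textbf{Inclusion $C\subset C'$.} This one is immediate. If $q\in C$, then for every $v$ the infimum defining $h_C(v)$ is taken over a set containing $\langle q,v\rangle$. Hence $\langle q,v\rangle\geq h_C(v)$, which holds trivially when $h_C(v)=-\infty$.

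\textbf{Inclusion $C'\subset C$.} I will argue by contraposition. Take $q\notin C$. Since $C$ is closed, convex and nonempty, there is a unique nearest point $p_0\in C$ to $q$, by existence and uniqueness of the metric projection onto closed convex sets in $\R^d$. The variational inequality for the projection gives
\[
\langle p-p_0,\, q-p_0\rangle\leq 0 \quad\text{for all } p\in C.
\]
Set $v:=p_0-q\neq 0$. The inequality rewrites as $\langle p,v\rangle\geq \langle p_0,v\rangle$ for all $p\in C$, so $h_C(v)=\langle p_0,v\rangle$. On the other hand,
\[
\langle q,v\rangle=\langle p_0,v\rangle-|v|^2<h_C(v).
\]
Thus $q\notin C'$.

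\textbf{Main step.} The only real content is the existence of the projection and its variational inequality. For existence, minimize $|p-q|$ over the compact set $C\cap \bar B(q,R)$, where $R$ is large enough that this set is nonempty. For the inequality, expand $|q-(p_0+t(p-p_0))|^2\geq |q-p_0|^2$ for $t\in[0,1]$, which is allowed by convexity, divide by $t$ and let $t\to 0^+$. Uniqueness of $p_0$ is not actually needed.
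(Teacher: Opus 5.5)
Your proof is correct. It differs from the paper's mainly in that you prove the key separation fact yourself instead of citing it.

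The paper's proof is a one-line reduction. It rewrites $\inf_{p\in C}\langle p,v\rangle = -\sup_{p\in C}\langle p,-v\rangle$ and then appeals to the correspondence between closed convex sets and their support functions in Rockafellar, Section 13. That correspondence says a closed convex set is exactly the intersection of the closed half-spaces $\{x \mid \langle x,w\rangle \le \sup_{p\in C}\langle p,w\rangle\}$.

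You instead establish that fact directly through the nearest-point projection. The variational inequality hands you an explicit separating direction $v=p_0-q$. For this $v$ the infimum over $C$ is attained at $p_0$ and exceeds $\langle q,v\rangle$ by exactly $|v|^2$. Choosing $v=p_0-q$ rather than $q-p_0$ also keeps you in the $\inf$ convention, so you never need the sign flip.

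The paper's route is shorter and leaves all the convex-analytic content to the reference. Yours is self-contained, using only compactness and the Euclidean inner product. It also deals explicitly with the degenerate cases the paper leaves implicit, namely $C=\emptyset$ and $h_C(v)=-\infty$.
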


\begin{proof}
    It follows, from the correspondence between closed convex sets of $\R^d$ and their support functions, and by the fact that, for all $v\in \R^d$, we have $\inf_{p\in C}\{\langle p,v\rangle \} = -\sup_{p\in C}\{\langle p,-v\rangle \}$.
\end{proof}

We now use this correspondence on a convex cone $\sigma \subseteq \R^d$ to get a characterization of the points which belongs to the Newton polyhedron of a finite set of points, in terms of inequality on scalar products. 
For a finite subset $\{p_i\}_{i\in I}$ of $\R^d$, we denote by $\Ncal_{\sigma}(p_i \mid i\in I) := \Conv(\cup_{i\in I} (p_i + \sigma))$ the Newton polyhedron of $\{p_i\}_{i\in I}$. If $\sigma = \R_{\geq 0}^d$, we simply write $\Ncal(p_i \mid i\in I)$.


\begin{lemma}\label{Lem_NewtonPolCaracScalarProd}
    Let $I$ be a finite subset of $\N$ and let $q,p_i \in \Z^d$, for $i\in I$. Then $q \in \mathcal{N}_{\sigma}(p_i \mid i\in I)$, if and only if
    \[
    \forall v\in \sigma^{\vee}\cap \Z^d \text{ \hspace{0.2cm} }\langle q,v\rangle \geq \min_i\big\{\langle p_i,v\rangle\big\}
    \]
\end{lemma}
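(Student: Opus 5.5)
Write $C := \Ncal_\sigma(p_i \mid i\in I)$. The plan is to apply Lemma \ref{Lem_ConvexSetSuppFunction} to $C$ and then show that only directions $v$ in $\sigma^\vee\cap\Z^d$ matter. Throughout, $\sigma$ is a rational polyhedral cone, as in the toric setting. First, $C$ is closed and convex: it is the Minkowski sum $\Conv(p_i \mid i\in I)+\sigma$ of a polytope and a closed polyhedral cone, hence a polyhedron. So Lemma \ref{Lem_ConvexSetSuppFunction} gives
\[
q\in C \iff \forall v\in\R^d,\ \langle q,v\rangle\ \geq\ \inf_{p\in C}\langle p,v\rangle .
\]

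Next I compute the infimum. Write $p=\sum_i\lambda_i p_i + s$ with $s\in\sigma$ and $\lambda$ a convex combination. Then
\[
\inf_{p\in C}\langle p,v\rangle=\min_i\langle p_i,v\rangle+\inf_{s\in\sigma}\langle s,v\rangle .
\]
If $v\in\sigma^\vee$, the last infimum is $0$. If $v\notin\sigma^\vee$, there is $s\in\sigma$ with $\langle s,v\rangle<0$, and scaling $s$ makes the infimum $-\infty$. For such $v$ the inequality holds trivially, so
\[
q\in C\iff \forall v\in\sigma^\vee,\ \langle q,v\rangle\geq\min_i\langle p_i,v\rangle .
\]

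Finally I pass from real to integer directions. The direction ($\Rightarrow$) is immediate, since $\sigma^\vee\cap\Z^d\subset\sigma^\vee$. For ($\Leftarrow$), both sides of the inequality are positively homogeneous in $v$, so it extends from $\sigma^\vee\cap\Z^d$ to $\sigma^\vee\cap\Q^d$. The function $v\mapsto\langle q,v\rangle-\min_i\langle p_i,v\rangle$ is continuous, being a minimum of finitely many linear forms. Since $\sigma$ is rational, so is $\sigma^\vee$, and $\sigma^\vee\cap\Q^d$ is dense in $\sigma^\vee$. The inequality is closed under limits, so it holds on all of $\sigma^\vee$, and the previous step gives $q\in C$.

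The one point needing care is this density step. It requires $\sigma^\vee$ to be generated by finitely many rational vectors, so that rational nonnegative combinations of these generators are dense in $\sigma^\vee$. This holds in the paper's setting, where $\sigma^\vee=\Gamma_\R$ with $\Gamma$ finitely generated in $\Z^d$. Note that the integrality of $q$ and the $p_i$ plays no role in the argument.
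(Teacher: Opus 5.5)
Your proof is correct. It agrees with the paper on the forward direction and on the reduction via Lemma \ref{Lem_ConvexSetSuppFunction}, but the converse takes a genuinely different route.

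The paper argues by contradiction. It takes a real $v_0$ with $\langle q,v_0\rangle<\min_i\langle p_i,v_0\rangle$ and perturbs it to an arbitrary rational vector, which it scales to some $v_1\in\Z^d$. When $v_1\notin\sigma^\vee$, it uses Gubeladze's description of interior points of $\sigma\cap\Z^d$ to get $q+nx\in\Ncal_\sigma(p_i\mid i\in I)$. It then writes $\langle q+nx,v_1\rangle\geq\inf_{p\in\Ncal_\sigma}\langle p,v_1\rangle>\langle q,v_1\rangle$. Your computation of the infimum shows this last inequality is unjustified: for $v_1\notin\sigma^\vee$ the infimum is $-\infty$, so no contradiction actually arises in that case.

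Your argument never leaves $\sigma^\vee$, so it avoids this problem. You first note that directions outside $\sigma^\vee$ impose no condition. You then pass from $\sigma^\vee\cap\Z^d$ to all of $\sigma^\vee$ using positive homogeneity, continuity of $v\mapsto\langle q,v\rangle-\min_i\langle p_i,v\rangle$, and density of $\sigma^\vee\cap\Q^d$ in $\sigma^\vee$. This is shorter and more elementary than the paper's route. It needs neither full-dimensionality of $\sigma$ (which the Gubeladze step uses) nor integrality of $q$ and the $p_i$.

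Your approach also isolates the hypothesis the statement genuinely requires: density of rational points in $\sigma^\vee$, which holds when $\sigma$ is rational polyhedral, as in the toric setting. Without it the lemma fails. For instance, take $\sigma^\vee=\R_{\geq 0}(1,\sqrt2)$ in $\R^2$. Then $\sigma^\vee\cap\Z^2=\{0\}$, so the condition holds for every $q$, while $\Ncal_\sigma(0)=\sigma$ is a half-plane that does not contain $q=(-1,0)$.
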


\begin{proof}
    Let $\sum_{i\in I} \lambda_i(p_i+z_i)$ be an element of $\Ncal_\sigma(p_i \mid i\in I)$ i.e. $z_i\in \sigma$, $\lambda_i \in \R_{\geq 0}$ and $\sum_i \lambda_i = 1$. Then, for all $v\in \sigma^\vee$, we have \[\langle \sum_{i\in I} \lambda_i(p_i+z_i), v \rangle = \sum_{i\in I} \lambda_i\langle p_i,v \rangle + \sum_{i\in I} \lambda_i\langle z_i,v \rangle \geq \sum_{i\in I} \lambda_i\langle p_i,v \rangle\]
    because every $\lambda_i$ is positive and $z_i \in \sigma,v\in \sigma^\vee$ implies $\langle z_i,v \rangle \geq 0$.
    Moreover, we have 
    \[
    \sum_{i\in I} \lambda_i\langle p_i,v \rangle \geq \left(\sum_{i\in I} \lambda_i \right) \min_{i\in I}\{\langle p_i,v \rangle\} = \min_{i\in I}\{\langle p_i,v \rangle\}
    \]
    So, for all $v\in \sigma^\vee$, we get $\inf_{p\in \Ncal_\sigma(p_i \mid i\in I)}\big\{\langle p_,v \rangle\big\} = \min_{i\in I}\big\{\langle p_i,v \rangle\big\}$ and we get the first implication.
    
    Conversely, let us assume that $\langle q,v\rangle \geq \min_{i\in I}\{\langle p_i,v\rangle\}$, for all $v\in \sigma^\vee\cap\Z^d$. By Lemma \ref{Lem_ConvexSetSuppFunction}, we have to show that $\langle q,v\rangle \geq \inf_{p\in \Ncal_\sigma(p_i \mid i\in I)}\big\{\langle p,v\rangle\big\}$, for all $v\in \R^d$. Suppose there exists $v_0\in \R^d$ such that $\langle q,v_0\rangle < \inf_{p\in\Ncal_\sigma(p_i \mid i\in I)}\{\langle p,v_0\rangle\} = \min_i\{\langle p_i,v_0\rangle\}$. Then there exists $v_0^{\Q}\in\Q^d$ such that $\langle q,v_0^{\Q}\rangle < \min_i\{\langle p_i,v_0^{\Q}\rangle\}$ and so, we can consider a number $N$ verifying $Nv_0^{\Q} \in \Z^d$. We get that $\langle q,Nv_0^{\Q}\rangle = N\langle q,v_0^{\Q}\rangle < N\min_i\big\{\langle p_i,v_0^{\Q}\rangle\big\} = \min_i\big\{\langle p_i,Nv_0^{\Q}\rangle\big\}$. Let us write $v_1 := Nv_0^{\Q} \in \Z^d$.
    
    If $v_1\in \sigma^\vee$, then we get a contradiction. Let us assume that $v_1\notin \sigma^\vee$. Then there exists $x \in \sigma$ such that $\langle x,v_1\rangle < 0$. Since we can consider a small enough neighborhood of $x$ on which the same inequality is verified, we can assume that $x\in \Int(\sigma) \cap \Q^d$, where $\Int$ is the relative interior. After multiplying by a sufficiently large integer, we can assume that $x\in \Int(\sigma \cap \Z^d)$. By \cite[Section 5]{Gubeladze1989AndersonsConj}, we have $x\in \Int(\sigma\cap \Z^d)$ if, and only if, the semigroup $(\sigma\cap \Z^d) + (-x)\N$ equals $\Z^d$. This mean that we can consider $n\in \N$ and $z\in \sigma\cap \Z^d$ such that $q-p_i = z - nx$, for any one of the $p_i$'s. Then $q-p_i + nx \in \sigma\cap \Z^d$ and we get $q+nx \in p_i + \sigma \subseteq \Ncal_\sigma(p_i \mid i\in I)$. So, by Lemma \ref{Lem_ConvexSetSuppFunction}, we get $\langle q+nx,v_1\rangle \geq \inf_{p\in\Ncal_\sigma(p_i\mid i \in I)}\{\langle p,v_1 \rangle\} > \langle q,v_1\rangle$. This implies that $\langle nx,v_1 \rangle > 0$ which is a contradiction.

\end{proof}

\section{A criterion for the elements which are not in the Lipschitz saturation}\label{Section_CriterionForNotinLipSat}

Let $(X,0)$ be a toric singularity and let $(\Bar{X},0)$ be its normalization. Then we have $$\Ocal_{X,0}\simeq \C\{\Gamma\} \subset \Ocal_{\Bar{X},0}\simeq \C\{\Bar{\Gamma}\}$$
with $\Gamma \subset \Z^d$ an affine semigroup and $\Bar{\Gamma} = \Z^d \cap (\R_{\geq 0}\Gamma)$ its saturation. Let $\mathcal{A} = \{\gamma_1,\dots,\gamma_n\} \subset \Z^d$ be such that $\N\mathcal{A} = \Gamma$ and let $\mathcal{B} =\{\delta_1,\dots,\delta_m\} \subset \Z^d$ be such that $\N\mathcal{B} = \Bar{\Gamma}$. We can consider a surjective morphism a semigroups
$$ \begin{array}{cccc}
    \pi: & \N^m & \to & \Bar{\Gamma}\\
     & q & \mapsto & \sum_k q_k \delta_k
\end{array}.$$
and a morphism of $\C$-algebras
$$ \begin{array}{ccc}
    \C[z_1,\dots,z_m] & \to & \C[x_1^{\pm 1},\dots,x_d^{\pm 1}]\\
    z_k & \mapsto & x^{\delta_k}\\
    z^q & \mapsto & x^{\pi(q)}
\end{array}$$
where $z=\prod_k z_k$. If we note $I_{\overline{\Gamma}}$ its kernel, let us recall that
$\Ocal_{\Bar{X},0} \simeq \C\{z_1,\dots,z_m\}/ I_{\Bar{\Gamma}}\C\{z_1,\dots,z_m\}$ (see \cite[Lemma 2.2]{DuarteGiles2023ToricLipSat}). This mean that, since considering an arc $\varphi : (\C,0)\to (\Bar{X},0)$ is equivalent to considering a morphism $\varphi^* : \Ocal_{\Bar{X},0} \to \C\{t\}$, then it is also equivalent to considering a morphism $\C\{z_1,\dots,z_m\} \to \C\{t\}, z_k\mapsto \varphi_k(t)$ where the $\varphi$'s verify the relations of $I_{\Bar{\Gamma}}$.

Recall that the Lipschitz saturation $(X^s,0)$ of $(X,0)$ is the germ of variety such that $\Ocal_{X^s,0} \simeq (\Ocal_{X,0})^s_{\Ocal_{\Bar{X},0}}$ which corresponds to the set of elements $g\in\Ocal_{\Bar{X},0}$ such that $g\otimes_\C 1-1\otimes_\C g \in \Ocal_{\Bar{X}\times\Bar{X},0}$ belongs to the integral closure of the ideal $I_{\Delta} = \langle f\otimes_\C 1-1\otimes_\C f \mid f\in \Ocal_{X,0} \rangle \subset \Ocal_{\Bar{X}\times\Bar{X},0}$. So, by the valuative criterion \cite[Theorem 2.1]{LejeuneTeissierCloture} of integral closures of ideals, an element $g\in\Ocal_{\Bar{X},0}$ is in the Lipschitz saturation if and only if, for every arc $\phi = (\varphi,\varphi') :  (\C,0) \to (\Bar{X}\times \Bar{X},0)$, it satisfies the condition $g\circ\varphi - g\circ\varphi' \in \phi^*(I_{\Delta})$. This condition is of course equivalent to $\ord_t(g\circ\varphi - g\circ\varphi') \geq \ord_t\big(\phi^*(I_{\Delta})\big)$.

Moreover, by definition, the algebra $\C\{\Gamma\}\simeq \Ocal_{X,0}$ is generated by the set $\{ x^{\gamma_i} \}_{i=1,\dots,n}$. This implies that $I_{\Delta} = \langle x^{\gamma_i}\otimes_\C 1 - 1\otimes_\C x^{\gamma_i} \mid \gamma_i\in \mathcal{A} \rangle$  with $x^{\gamma_i} := \prod_k x_k^{\gamma_{i,k}}$. Since $\Gamma \subset \Bar{\Gamma}$, then there exists $p_1,\dots,p_n \in \N^m$ such that $\pi(p_i) = \gamma_i$. So, we can rewrite $I_{\Delta} = \langle x^{\pi(p_i)}\otimes_\C 1 - 1\otimes_\C x^{\pi(p_i)} \mid i=1,\dots,n \rangle$, and so $I_{\Delta} = \langle (\prod_kx^{\delta_k})^{p_i}\otimes 1 - 1\otimes (\prod_kx^{\delta_k})^{p_i} \mid i=1,\dots,n \rangle$.

If $\phi = (\varphi, \varphi') : (\C,0)\to (\Bar{X}\times \Bar{X},0)$ is an arc, we have seen that it is equivalent to consider elements $\varphi_k(t),\varphi'_k(t) \in \C\{t\}$ verifying the relations of $I_{\Bar{\Gamma}}$. Each $\varphi_{k}$ is the image of the generator $x^{\delta_k}\in \C\{\Bar{\Gamma}\}$. So we get
$\phi^*(I_\Delta) = \langle \varphi^{p_i} - (\varphi')^{p_i} \mid p_i\in \N^m \text{ such that }\pi(p_i)=\gamma_i \rangle$ where, for $p\in \N^m$, we write $\varphi^p := \prod_{k=1}^m \varphi_k^{p_k}$. \\

\begin{rmk}\label{Rmk_ValuativeCriterionFroMonomials}
To conclude, every monomial in $\C\{\Bar{\Gamma}\}$ can be written $x^{\pi(q)}$ for a non-unique element $q\in \N^m$, and we have that $x^{\pi(q)} \in \C\{\Gamma\}^s$ if and only if, for every arc $\phi = (\varphi, \varphi') : (\C,0)\to (\Bar{X}\times \Bar{X},0)$, we have 
$$ \ord_t(\varphi^q - (\varphi')^q) \geq \min_{i=1\dots,n} \big\{ \ord_t(\varphi^{p_i}-(\varphi')^{p_i}) \big\} $$
where the $p_i\in \N^m$ are such that $\pi(p_i)=\gamma_i$.\\
\end{rmk}


\begin{notation} We will denote by $\Gamma^s$ the   representing the monomials of $\C\{\Bar{\Gamma}\}$ which belong to $\Ocal_{X^s,0}$. By \cite[Theorem 3.2]{DuarteGiles2023ToricLipSat} we know that $\Ocal_{X^s,0} \simeq \C\{\Gamma^s\}$, but only in the case where $\Bar{X}$ is smooth, which corresponds to the situation where $\C\{\Bar{\Gamma}\} \simeq \C\{\N^d\}$ or, in other words, $\Ocal_{\Bar{X},0}\simeq \C\{z_1,\dots,z_m\}$.\\

Let $\phi = (\varphi,\varphi') : (\C,0) \to (X\times X,0)$ be an arc, we will always imply that $\varphi$ and $\varphi' : (\C,0) \to (X,0)$ are of the form $\varphi_k(t) = a_kt^{v_k} + \cdots$ and $\varphi'_k(t) = (a_k)'t^{v_k'} + \cdots$. As said previously, for $p\in\N^m$, we write $\varphi^p := \prod_{k=1}^m \varphi_k^{p_k}$, idem for $\varphi',a$ and $a'$. Moreover, we will call $v$ and $v'$ the vectors $(v_k)_k $ and $(v'_k)_k$ and, for all $p$, we will write $\langle p,\tilde{v}\rangle := \min \big\{ \langle p,v\rangle , \langle p,v'\rangle \big\}$.
\end{notation}

\begin{rmk}\label{Rmk_JumpOfValuationIFF}
    Let $p\in \N^m$. Then 
    \[
    \begin{array}{ccc}
         \ord\big(\varphi^p - (\varphi')^p\big) > \langle p,\tilde{v}\rangle & \iff &  \langle p, v\rangle = \langle p, v'\rangle \text{ and }a^p - (a')^p = 0 \medskip\\
         & \iff & p\in (v-v')^\perp\text{ and }a^p - (a')^p = 0
    \end{array}   
    \]
\end{rmk}

According to remark \ref{Rmk_ValuativeCriterionFroMonomials}, showing that an element does not belong to the Lipschitz saturation consists in exhibiting a carefully chosen arc. In order to do so, we need to control the cancellation of the leading coefficients. This will be possible thanks to the following lemma. This is the reason why different lattices naturally arise in the description of the semigroup of the Lipschitz saturation.

\begin{lemma}\label{Lem_SolutionSystem}
    Let $p_1,\dots,p_n,q \in \Z^d$. Then $q\in \sum_i p_i\Z$ if and only if, for all $a$ and $a'\in (\C^*)^{d}$, we have 
    \[ \left\{\begin{array}{l}
          a^{p_1}-(a')^{p_1} = 0 \\
          \vdots\\
          a^{p_n}-(a')^{p_n} = 0
    \end{array}\right. \implies a^{q}-(a')^{q} = 0
    \]
\end{lemma}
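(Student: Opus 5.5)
The plan is to reduce everything to the torus $(\C^*)^d$ by setting $b := a/a'$ coordinatewise, so $b\in(\C^*)^d$. Since $a,a'$ have nonzero coordinates, for any $p\in\Z^d$ we have $a^p-(a')^p=0$ if and only if $b^p=1$. As $a'$ is arbitrary (take $a'=(1,\dots,1)$), the statement becomes: $q\in L:=\sum_i p_i\Z$ if and only if every character $\chi_q: b\mapsto b^q$ is trivial on the subgroup
\[
H:=\{b\in(\C^*)^d \mid b^{p_i}=1 \text{ for all } i\}.
\]

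The direct implication is immediate. If $q=\sum_i n_ip_i$ with $n_i\in\Z$, then $b^q=\prod_i (b^{p_i})^{n_i}=1$ whenever all $b^{p_i}=1$.

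For the converse, I argue by contrapositive. Assume $q\notin L$; I must produce $b\in H$ with $b^q\neq 1$. By the theory of finitely generated abelian groups, $\Z^d/L$ is finitely generated, and the class $\bar q$ of $q$ in it is nonzero. Every nonzero element of a finitely generated abelian group is detected by some homomorphism to $\C^*$. Concretely, write $\Z^d/L\simeq \Z^r\oplus\bigoplus_j \Z/e_j\Z$. If $\bar q$ has a nonzero free component, send the corresponding generator to a non-root-of-unity, say $2$, and the other generators to $1$. Otherwise some torsion component of $\bar q$ is nonzero, say its class $c\not\equiv 0 \pmod{e_j}$; then send that generator to $e^{2i\pi/e_j}$ and the rest to $1$, so that $\bar q\mapsto e^{2i\pi c/e_j}\neq1$. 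This gives a character $\psi:\Z^d/L\to\C^*$ with $\psi(\bar q)\neq 1$.

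It remains to realize $\psi\circ(\Z^d\to\Z^d/L)$ as some $\chi_b$. Any homomorphism $\Z^d\to\C^*$ has the form $p\mapsto b^p$ with $b_k$ the image of the $k$-th basis vector $e_k$. This $b$ lies in $H$, because the composite kills $L$, and it satisfies $b^q\neq1$. Finally set $a:=b$ and $a':=(1,\dots,1)$, which gives the required counterexample to the implication.

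The only step needing care is the separation step: showing that characters of $\Z^d/L$ into $\C^*$ separate points. This relies on the structure theorem, or equivalently on the divisibility of $\C^*$ making it injective as a $\Z$-module, so that characters extend from the cyclic subgroup $\langle\bar q\rangle$ to all of $\Z^d/L$. I would cite the Smith normal form for a clean, explicit argument.
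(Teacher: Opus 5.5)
Your proof is correct, but for the converse it follows a different route from the paper. The reduction to $b=a/a'$ and the direct implication are the same in both. For the converse, you argue group-theoretically. You use the structure theorem to build a character of $\Z^d/L$, with $L=\sum_i p_i\Z$, that is nontrivial on $\bar q$, and then pull it back to a point $b\in(\C^*)^d$. The paper instead argues ideal-theoretically in the Laurent polynomial ring. From $\Zcal(\langle x^{p_i}-1\rangle)\subseteq\Zcal(x^q-1)$ it obtains $x^q-1\in\sqrt{\langle x^{p_i}-1\rangle}$ by the Nullstellensatz. It then cites two results of Eisenbud--Sturmfels: the lattice ideal $I(\rho)=\langle x^m-1\mid m\in L\rangle$ is radical in characteristic $0$, and $x^q-1\in I(\rho)$ forces $q\in L$. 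Your version is self-contained and constructive. It exhibits the witness pair $(a,a')=(b,(1,\dots,1))$, which is exactly what Proposition \ref{Prop_CriterionToNotBelongInGammas} uses. It also shows precisely what is needed from $\C$: a non-torsion element and primitive roots of unity of every order. That is where characteristic $0$ enters your argument; in the paper it enters through the radicality of $I(\rho)$. The paper's argument is shorter on the page and places the lemma inside binomial-ideal theory, at the cost of relying on that cited machinery.
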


\begin{proof}

    First, one can see that
    \[
    \exists (a,a')\in (\C^*)^{2d}\text{, } a^p-(a')^p=0 \iff \exists a\in \C^{d}\text{, } a^p-1=0.
    \]
    
    Let $q\in \sum_i p_i\Z$. Then, there exists some $k_i\in \Z$ such that $q = \sum_i k_ip_i$. Let  $a\in\C^d$ such that $a^{p_i}=1$, for all $i$. Then 
    \[
    a^q = \prod_{k=1}^d (a_k)^{\sum_i k_ip_{i,k}} = \prod_{k=1}^d \prod_{i=1}^n (a_k^{p_{i,k}})^{k_i} = \prod_{i=1}^n \big(\prod_{k=1}^d a_k^{p_{i,k}}\big)^{k_i} = \prod_{i=1}^n (a^{p_i})^{k_i} = 1
    \]

    Conversely, let us denote by $I$ the ideal of $\C[x_i^{\pm 1},y_i^{\pm 1}]_{i=1,\dots,d}$ generated by the elements $x^{p_i}-1$ and, following the notation of \cite{EisenbudSturmfels1996BinomialIdeals} Section 2, let us consider the Laurent binomial ideal $I(\rho) = \langle x^m - \rho(m) \mid m\in L_{\rho}\rangle$ where $L_{\rho}$ is the lattice generated by the elements $p_i$ in $\Z^d$ and $\rho : \Z^d \to \C^*$ is the constant morphism identically equal $1$. Let us assume that we have the inclusion $\Zcal(I) \subseteq \Zcal(x^q-1)$ of Zariski sets. Then, by the Nullstellensatz, we have $\sqrt{\langle x^q-1 \rangle} \subset \sqrt{I}$. Moreover, we have $I \subseteq I(\rho)$ by definition and, since we work over an algebraically closed field of characteristic $0$, then, by \cite{EisenbudSturmfels1996BinomialIdeals} Corollary 2.2, the ideal $I(\rho)$ is radical. Hence $\sqrt{I} \subseteq I(\rho)$. This implies that $x^q-1 \in I(\rho)$ and so, by \cite{EisenbudSturmfels1996BinomialIdeals} Theorem 2.1 (a), we get $q\in L_{\rho} = \sum_i p_i\Z$.
\end{proof}

By combining the characterization of Newton polyhedra in terms of scalar products with Lemma \ref{Lem_SolutionSystem}, we can construct suitably chosen arcs to obtain a criterion insuring that a given monomial element does not belong to the Lipschitz saturation.

\begin{prop}\label{Prop_CriterionToNotBelongInGammas}
    Let $(X,0)$ be a toric singularity, let $\Gamma\subset \Z^d$ by its associated semigroup, and let $\sigma$ be the cone generated by $\Gamma$. Let $p_1,\dots,p_n \in \N^m$ be such that $\pi(p_i)$ are generators of $\Gamma$ and let $q\in \N^m$. If there exists $I\subseteq \{1,\dots,n\}$ such that $\pi(q)\notin \Ncal_\sigma(\pi(p_i) \mid i\in I)$ and $q\notin \displaystyle{\sum_{i\notin I} p_i\Z}$, then $\pi(q)\notin \Gamma^s$.
\end{prop}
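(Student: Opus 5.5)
We construct an explicit arc $\phi=(\varphi,\varphi')$ for which the inequality of Remark \ref{Rmk_ValuativeCriterionFroMonomials} fails. We work with monomial arcs with generic coefficients, where each $\varphi_k,\varphi'_k$ is the monomial $a_kt^{v_k}$, $a'_kt^{v_k}$ respectively, with the same exponent vector $v=v'$. We take $v$ to be the image under $\pi^*$ of a weight $w$ on $\Z^d$. Concretely, put $\varphi_k(t)=a_k t^{\langle \delta_k,w\rangle}$, which is the pullback of the arc $x_j\mapsto \alpha_j t^{w_j}$ of the torus (extended through the toric variety). This guarantees that the relations of $I_{\Bar\Gamma}$ hold automatically, with $a^{p}=\alpha^{\pi(p)}$. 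Then $\langle p,\tilde v\rangle=\langle \pi(p),w\rangle$ for every $p\in\N^m$.

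The first step chooses the weight. Since $\pi(q)\notin\Ncal_\sigma(\pi(p_i)\mid i\in I)$, Lemma \ref{Lem_NewtonPolCaracScalarProd} gives $w\in\sigma^\vee\cap\Z^d$ with $\langle\pi(q),w\rangle<\min_{i\in I}\langle\pi(p_i),w\rangle$. We want $w$ to be positive on $\sigma\setminus\{0\}$ so that the arc passes through $0$. Here we would use that the strict inequality is open, so $w$ can be perturbed into the interior of $\sigma^\vee$ and rescaled to be integral. One caveat: the lemma as stated uses $\sigma^\vee$, where $\sigma$ plays the role of the recession cone. In the present setting, which $\sigma$ is meant should be matched; we take $\sigma=\R_{\ge0}\Gamma$, and the weights then live in its dual, which is exactly what makes arcs land in $\Bar X$.

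The second step chooses the coefficients. Since $q\notin\sum_{i\notin I}p_i\Z$, Lemma \ref{Lem_SolutionSystem}, applied in $\Z^m$ to the pair of coefficient vectors, gives $a,a'\in(\C^*)^m$ with $a^{p_i}=(a')^{p_i}$ for all $i\notin I$ but $a^q\neq(a')^q$.

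This is the main obstacle. The pair $a,a'$ must also be realizable as coefficient vectors of arcs in $\Bar X$, that is, satisfy the binomial relations of $I_{\Bar\Gamma}$, so that $a^p$ depends only on $\pi(p)$. I would first try to handle this by working with the condition on $\pi(q)$ versus $\pi(p_i)$ in $\Z^d$: apply Lemma \ref{Lem_SolutionSystem} to $\pi(q)$ and $\pi(p_i)$ to get $\alpha,\alpha'\in(\C^*)^d$, and set $a_k=\alpha^{\delta_k}$. This requires $\pi(q)\notin\sum_{i\notin I}\pi(p_i)\Z$. That follows from the hypothesis if $\pi$ is injective, as in the smooth-normalization case $\Bar\Gamma=\N^d$ with $\pi=\mathrm{id}$, which is the case the notation assumes. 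Otherwise I would argue via the kernel lattice, or interpret the hypothesis on the $\Z^d$ side.

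The third step compares orders along this arc $\phi$:
\begin{itemize}
\item for $i\in I$, $\ord(\varphi^{p_i}-(\varphi')^{p_i})\ge\langle\pi(p_i),w\rangle>\langle\pi(q),w\rangle$;
\item for $i\notin I$, the leading coefficients cancel, and since the arcs are pure monomials the difference is identically $0$, of order $+\infty$;
\item for $q$, $\varphi^q-(\varphi')^q=(a^q-(a')^q)t^{\langle\pi(q),w\rangle}$, whose order is exactly $\langle\pi(q),w\rangle$ by Remark \ref{Rmk_JumpOfValuationIFF}.
\end{itemize}
Hence $\ord(\varphi^q-(\varphi')^q)<\min_i\ord(\varphi^{p_i}-(\varphi')^{p_i})$, and Remark \ref{Rmk_ValuativeCriterionFroMonomials} gives $\pi(q)\notin\Gamma^s$.
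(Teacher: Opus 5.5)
Your proof follows the same route as the paper's. You use the monomial arc $\varphi_k=a_kt^{\langle w,\delta_k\rangle}$, $\varphi'_k=a'_kt^{\langle w,\delta_k\rangle}$, with $w$ from Lemma \ref{Lem_NewtonPolCaracScalarProd}, coefficients from Lemma \ref{Lem_SolutionSystem}, and the same three-way comparison of orders. In the smooth-normalization case ($\bar\Gamma=\N^d$, $\pi=\mathrm{id}$) your argument is complete, and it is more careful than the paper on two points:
\begin{enumerate}
\item Pushing $w$ into $\Int(\sigma^\vee)$ is genuinely needed for $\phi(0)=0$. For instance, replace $w$ by $Nw+u$ with $u\in\Int(\sigma^\vee)\cap\Z^d$ and $N\gg0$; the interior is nonempty because $\sigma$ is strongly convex. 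The paper only takes $v\in\sigma^\vee$, and the lemma may return a boundary vector. For example, $v=(1,0)$ separates $q=(10,8)$ from $\Ncal(p_4)$ in Example \ref{Ex_KeyEx}, but it gives $\varphi_2=a_2$, an arc not through the origin.
\item Taking $a_k=\alpha^{\delta_k}$, i.e.\ pulling back a torus arc, is the correct way to make the relations of $I_{\bar\Gamma}$ hold. The paper's justification, with $(a,a')\in(\C^*)^{2d}$ and $a_k^{1/\delta_{k,k}}$, only makes sense when $m=d$ and $\delta_k=e_k$.
\end{enumerate}

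Of your two fallbacks for non-injective $\pi$, only the second is viable. No argument ``via the kernel lattice'' can work, because the statement as written is false in that case. Take $\Gamma$ generated by $(1,0),(1,2),(2,2),(2,3)$; then $\Z\Gamma=\Z^2$, since $(0,1)=(2,3)-(1,2)-(1,0)$. The cone $\sigma$ is spanned by $(1,0),(1,2)$, and $\bar\Gamma$ has Hilbert basis $\delta_1=(1,0)$, $\delta_2=(1,1)$, $\delta_3=(1,2)$. Choose lifts $p_1=e_1$, $p_2=e_3$, $p_3=e_1+e_3$, $p_4=e_2+e_3$, and set $q=2e_2$, $I=\{4\}$. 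Then $\pi(q)=(2,2)\notin(2,3)+\sigma=\Ncal_\sigma(\pi(p_4))$ and $q\notin\Z e_1\oplus\Z e_3=\sum_{i\neq4}p_i\Z$. Yet $\pi(q)=\pi(p_3)\in\Gamma\subseteq\Gamma^s$. Correspondingly, the relation $z_1z_3=z_2^2$ forces $a_2^2=(a'_2)^2$ whenever $a_1=a'_1$ and $a_3=a'_3$, so the arc the paper's proof asks for does not exist.

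The correct general statement has the hypothesis $\pi(q)\notin\sum_{i\notin I}\pi(p_i)\Z$, which implies $q\notin\sum_{i\notin I}p_i\Z$. Your torus-arc construction proves that version verbatim. So rather than leaving the general case open, you should state the restriction explicitly: either to injective $\pi$, or to the $\Z^d$-side hypothesis.
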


\begin{proof}
 
    By Lemma \ref{Lem_SolutionSystem}, we can consider $(a,a') \in (\C^*)^{2d}$ such that $a^{q}-(a')^{q} \neq 0$ and $a^{p_i}-(a')^{p_i} = 0$, for every $i\notin I$. Moreover, by lemma \ref{Lem_NewtonPolCaracScalarProd}, since $\pi(q)\notin \Ncal(\pi(p_i) \mid i\in I)$, we can consider $v\in \sigma^\vee \cap \Z^d$ such that 
    \[
    \langle \pi(q),v \rangle < \min_{i\in I}\big\{\langle \pi(p_i), v\rangle\big\}
    \]
    Let $\phi = (\varphi,\varphi') : (\C,0) \to (\Bar{X}\times \Bar{X},0)$ be the arc defined by $\varphi_k(t) =  a_k t^{\langle v, \delta_k\rangle}$ and $\varphi'_k(t) = a'_k t^{\langle v, \delta_k\rangle}$. This arc is well-defined because $v\in \sigma^\vee \cap \Z^d$ implies that $\langle v, \delta_k \rangle \in \N$ and since, for every $k$, we can write $a_k t^{\langle v, \delta_k\rangle} = \big(\prod_{j\neq k} t^{v_j}\times a_k^{1/\delta_{k,k}}t^{v_k}\big)^{\delta_k}$, it implies that the $\varphi_k$'s satisfy the same relations as the $x^{\delta_k}$ defined at the beginning of Section \ref{Section_CriterionForNotinLipSat}. Then, for any $p\in \N^m$, we have $$\varphi^p = \prod_{k=1}^m \varphi_k^{p_k} = \prod_{k=1}^m a_k^{p_k}t^{p_k\langle v,\delta_k \rangle} = (\prod_{k=1}^m a_k^{p_k})t^{\sum_k p_k \langle v, \delta_k \rangle} = a^{p}t^{\langle v,\pi(p)\rangle}.$$
    So $\varphi^p - (\varphi')^p = (a^{p} - (a')^{p})t^{\langle v, \pi(p) \rangle}$. We get that $\ord(\varphi^{p_i}-\varphi'^{p_i}) = \infty$, for all $i\notin I$, and $\ord(\varphi^{q}-\varphi'^{q}) = \langle \pi(q),v \rangle < \min_{i\in I}\big\{\langle \pi(p_i), v\rangle\big\} \leq \min_{i\in I}\big\{\ord(\varphi^{p_i}-\varphi'^{p_i})\big\}$. Hence $\varphi^{q}-\varphi'^{q} \notin \phi^*(I_\Delta)$ and $q\notin \Gamma^s$.


\end{proof}

\section{Description of the Lipschitz saturation of a toric singularity}\label{SecMain}

In this section, we show how to explicitly compute the semigroup of the Lipschitz saturation of a toric singularity with smooth normalization. This result can be viewed as a generalization of the one-dimensional case studied in \cite[Theorem VI.1.6]{BrianconGallicoGranger1980DeformationsEqu} and the proof is inspired by the strategy used there, though it introduces additional arguments to handle the complexities arising in higher dimensions.

As in the one-dimensional case, we would like to get a better understanding of $\ord(\phi^*(I_\Delta))$. We start by looking at the elements for which the order does not jump.

\begin{definition}
    Let $(X,0)$ be a toric singularity, let $\phi = (\varphi,\varphi') : (\C,0) \to (\overline{X}\times \overline{X},0)$ be an arc and let $p_1,\dots,p_n \in \N^m$ be such that $x^{\pi(p_i)}$ are generators of $\Gamma$. Then we define
    \[
    M_\phi := \Big\{ p_i \mid \ord\big(\varphi^{p_i} - (\varphi')^{p_i}\big) = \langle p_i, \tilde{v}\rangle\Big\}
    \]
\end{definition}

The importance of considering $M_\phi$ lies in the following proposition.

\begin{prop}\label{Prop_ElementInMphiAreOk}
    Let $(X,0)$ be a toric singularity, let $\phi = (\varphi,\varphi') : (\C,0) \to (\overline{X}\times \overline{X},0)$ be an arc and let $q\in\N^m$. Then
    \[
    q\in \Ncal(M_\phi) \implies \ord\big(\varphi^q - (\varphi')^q\big) \geq \ord\big(\phi^*(I_\Delta)\big)
    \]
\end{prop}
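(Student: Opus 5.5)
The plan is to compare both sides of the inequality with the quantity $\langle q,\tilde{v}\rangle$. The argument has three short steps, and it only needs the ``easy'' direction of Lemma \ref{Lem_NewtonPolCaracScalarProd}: points of a Newton polyhedron satisfy the support-function inequalities. Here $\Ncal(M_\phi)$ is taken in $\R^m$ with respect to the orthant $\R_{\geq 0}^m$, whose dual is again $\R^m_{\geq 0}$. If $M_\phi=\emptyset$ then $\Ncal(M_\phi)=\emptyset$ and there is nothing to prove, so assume $M_\phi\neq\emptyset$.

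\emph{Lower bound for the left-hand side.} For any $p\in\N^m$ we have $\ord(\varphi^p)=\langle p,v\rangle$ and $\ord((\varphi')^p)=\langle p,v'\rangle$. The order of a difference is at least the minimum of the two orders, so
\[
\ord\big(\varphi^q-(\varphi')^q\big)\geq \langle q,\tilde v\rangle .
\]
This is the trivial half of Remark \ref{Rmk_JumpOfValuationIFF}.

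\emph{Upper bound for the right-hand side.} The ideal $\phi^*(I_\Delta)$ is generated by the elements $\varphi^{p_i}-(\varphi')^{p_i}$, so its order is the minimum of their orders. Restricting the minimum to the $p_i\in M_\phi$, where by definition the order equals $\langle p_i,\tilde v\rangle$, gives
\[
\ord\big(\phi^*(I_\Delta)\big)\leq \min_{p_i\in M_\phi}\langle p_i,\tilde v\rangle .
\]

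\emph{The convexity step.} It remains to show $\langle q,\tilde v\rangle\geq \min_{p\in M_\phi}\langle p,\tilde v\rangle$. The vectors $v,v'$ consist of orders of power series, so they lie in $\N^m\subset(\R^m_{\geq0})^\vee\cap\Z^m$. Applying the first implication of Lemma \ref{Lem_NewtonPolCaracScalarProd} (or its direct convexity computation) to $q\in\Ncal(M_\phi)$ with $v$, and then with $v'$, gives
\[
\langle q,v\rangle\geq\min_{p\in M_\phi}\langle p,v\rangle\geq \min_{p\in M_\phi}\langle p,\tilde v\rangle,
\]
and the same inequality with $v'$ in place of $v$. Taking the minimum of the two yields the claim, and chaining the three inequalities proves the proposition.

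The only delicate point is degenerate arcs in which some coordinate $\varphi_k$ or $\varphi'_k$ is identically zero, so that $v_k=\infty$. I would handle this by working in $\N\cup\{\infty\}$ with the conventions $0\cdot\infty=0$ and $\ord(0)=\infty$. The convexity computation in the proof of Lemma \ref{Lem_NewtonPolCaracScalarProd} uses only nonnegative coefficients and the monotonicity of $\langle\cdot,v\rangle$ on $p+\R^m_{\geq0}$, so it remains valid with these conventions. Alternatively, if the paper implicitly restricts to arcs with all coordinates nonzero, no extra care is needed.
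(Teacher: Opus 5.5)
Your proof is correct and matches the paper's argument: you bound $\ord(\varphi^q-(\varphi')^q)$ below by $\langle q,\tilde v\rangle$, and you apply the easy direction of Lemma \ref{Lem_NewtonPolCaracScalarProd} with $v$ and with $v'$ to get $\langle q,\tilde v\rangle\geq\min_{p\in M_\phi}\langle p,\tilde v\rangle=\min_{p\in M_\phi}\ord(\varphi^p-(\varphi')^p)\geq\ord(\phi^*(I_\Delta))$. Your treatment of the case $M_\phi=\emptyset$ and of coordinates of infinite order covers details the paper leaves implicit.
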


\begin{proof}
    Let $q\in \Ncal(M_\phi)$. Then, by Lemma \ref{Lem_NewtonPolCaracScalarProd}, we have 
    \[
    \langle q,v\rangle \geq \min_{p\in M_\phi}\big\{\langle p,v\rangle\big\} \geq \min_{p\in M_\phi}\big\{\langle p,\tilde{v}\rangle\big\}
    \]
    and the same is true for $v'$. Hence $\langle q,\tilde{v}\rangle \geq \min_{p\in M_\phi}\big\{\langle p,\tilde{v}\rangle\big\}$. Then 
    \[
    \begin{array}{llll}
        \ord(\varphi^q - (\varphi')^q) & \geq & \langle q,\tilde{v}\rangle & \geq \min_{p\in M_\phi}\big\{\langle p,\tilde{v}\rangle\big\} = \min_{p\in M_\phi}\big\{\ord(\varphi^p - (\varphi')^p)\big\}\medskip\\
        &&& \geq \min_{p\in \pi^{-1}(\Gamma)}\big\{\ord(\varphi^p - (\varphi')^p)\big\} = \ord(\phi^*(I_\Delta))
    \end{array}
    \]
\end{proof}

Considering the previous proposition, we only need to understand, for a given arc $\phi$, what happens to the order of $\ord(\varphi^q-(\varphi')^q)$ when $q$ does not belong to $\Ncal(M_\phi)$. In order to do that, we first need to show the following technical lemma.

\begin{lemma}\label{Lem_OnePlusPhiInBinomial}
    Let $\{\varphi_i\}_i$ and $\{\varphi'_i\}_i \in \C\{t\}^m$ and let us write $\varphi_i-\varphi_i'=b_it^{\alpha_i}(1+G_i)$ with $G_i \in \C\{t\}$. Then, there exists $F_1,\dots, F_d \in \C\{t\}$ such that, for all $q\in \N^m$, we have
    \[
    (1+\varphi)^q - (1+\varphi')^q = \sum_{i=1}^m q_ib_it^{\alpha_i}(1+F_i)
    \]
\end{lemma}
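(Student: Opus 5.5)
The plan is a telescoping argument, one coordinate at a time, followed by an expansion of each one-variable difference of powers.

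I will read the statement in the setting where it will be used: the $\varphi_i,\varphi'_i$ have positive order, so $\varphi_i(0)=\varphi'_i(0)=0$, and the $F_i$ have $F_i(0)=0$ (they will be allowed to depend on $q$). The number of $F$'s should be $m$, matching the index $i=1,\dots,m$.

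\textbf{Step 1 (telescoping).} Fix $q\in\N^m$ and write $u_k:=(1+\varphi_k)^{q_k}$ and $u'_k:=(1+\varphi'_k)^{q_k}$. The standard identity
\[
\prod_{k=1}^m u_k-\prod_{k=1}^m u'_k=\sum_{i=1}^m \Big(\prod_{k<i}u'_k\Big)\,(u_i-u'_i)\,\Big(\prod_{k>i}u_k\Big)
\]
splits $(1+\varphi)^q-(1+\varphi')^q$ into $m$ terms. In the $i$-th term only the $i$-th coordinate changes.

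\textbf{Step 2 (one coordinate).} For $q_i\geq 1$, factor $u_i-u'_i$ as
\[
(\varphi_i-\varphi'_i)\sum_{j=0}^{q_i-1}(1+\varphi_i)^j(1+\varphi'_i)^{q_i-1-j}.
\]
Since $\varphi_i(0)=\varphi'_i(0)=0$, each summand is $1+(\text{series vanishing at }0)$. Hence the sum equals $q_i(1+H_i)$ with $H_i(0)=0$.

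For the remaining factors $\prod_{k<i}u'_k$ and $\prod_{k>i}u_k$, each factor is of the form $1+(\text{series vanishing at }0)$, so the products are too.

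Multiplying these together with $\varphi_i-\varphi'_i=b_it^{\alpha_i}(1+G_i)$, the $i$-th term becomes $q_ib_it^{\alpha_i}(1+F_i)$. Here $1+F_i$ is the product of the units $(1+G_i)$, $(1+H_i)$ and the two outer products. The only care needed is that $G_i(0)=0$, which holds when $b_it^{\alpha_i}$ is the leading term of $\varphi_i-\varphi'_i$. If $q_i=0$, the $i$-th term vanishes and any $F_i$ works, so the formula still holds.

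\textbf{Main obstacle.} The computation is routine; the real issue is making the statement precise. The $F_i$ genuinely depend on $q$, and, when the leading terms are meant to be preserved, on $G_i(0)=0$ and $\varphi_i(0)=\varphi'_i(0)=0$. I would state these hypotheses explicitly.

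In later use the key point is that $1+F_i$ is a unit of $\C\{t\}$. Therefore
\[
\ord\bigl((1+\varphi)^q-(1+\varphi')^q\bigr)\geq \min_{i:\,q_i\neq 0}\alpha_i,
\]
with equality when the leading coefficients do not cancel. That is what will be needed to compare orders, so I would check the unit property carefully rather than the exact form of $F_i$.
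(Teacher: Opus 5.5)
Your proof is correct, and it reaches the formula by a different decomposition than the paper's.

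The paper first expands both sides binomially, writing the difference as $\sum_{k}\prod_i\binom{q_i}{k_i}\,(\prod_i\varphi_i^{k_i}-\prod_i(\varphi'_i)^{k_i})$. It then applies $AB-CD=(A-C)B+C(B-D)$ and the divisibility $(\varphi_i-\varphi'_i)\mid(\varphi_i^{k_i}-(\varphi'_i)^{k_i})$ to each monomial difference. It isolates the linear multi-indices $k=e_i$, which contribute exactly $q_i(\varphi_i-\varphi'_i)$, and absorbs all remaining terms into factors $1+D_i$.

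You never expand. You telescope directly on the factors $(1+\varphi_k)^{q_k}$ and factor a single difference of $q_i$-th powers, so $q_i$ appears as the value at $t=0$ of $\sum_j(1+\varphi_i)^j(1+\varphi'_i)^{q_i-1-j}$.

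The paper's route displays the difference as a linear part plus higher-order terms, essentially a first-order Taylor expansion. Yours avoids the multi-index bookkeeping and exhibits $1+F_i$ explicitly as a product of units. Its real advantage is that it makes explicit three things the paper leaves implicit:
\begin{enumerate}
\item $F_i(0)=0$ needs $\varphi_i(0)=\varphi'_i(0)=0$ and $G_i(0)=0$. Both hold for the $\psi_i,\psi'_i$ to which the lemma is applied in Proposition~\ref{Prop_ChainOfJumps}. There, the unit property is what lets one read off $\sum_{i\in I_1}q_ib_i$ as the coefficient of $t^{\delta_0}$.
\item The $F_i$ depend on $q$. The paper's proof also fixes $q$ first, and with $F_1$ independent of $q$ the statement already fails for $m=1$, $\varphi_1=t$, $\varphi'_1=0$: $q=1$ forces $F_1=0$, while $q=2$ forces $F_1=t/2$.
\item The case $q_i=0$ needs care. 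There, the paper's step $q_i(\varphi_i-\varphi'_i)+(\varphi_i-\varphi'_i)C_i=q_i(\varphi_i-\varphi'_i)(1+D_i)$ tacitly requires $C_i=0$.
\end{enumerate}
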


\begin{proof}
    Let $q\in \N^m$. For every $i\in\{1,\dots,m\}$, we have $(1+\varphi_i)^{q_i} = \sum_{k=0}^{q_i}\binom{q_i}{k} \varphi_i^k$. Hence $(1+\varphi)^{q} = \prod_{i=1}^m (1+\varphi_i)^{q_i} = \sum_{k_1,\dots,k_d} (\prod_{i=1}^m\binom{q_i}{k})(\prod_{i=1}^d\varphi_i^{k_i})$. Then we get 
    \[
    \begin{array}{lll}
        (1+\varphi)^q - (1+\varphi')^q & = & \sum_{k_1,\dots,k_d} (\prod_{i=1}^m\binom{q_i}{k_i})\Big(\prod_{i=1}^m\varphi_i^{k_i}-\prod_{i=1}^m(\varphi')_i^{k_i}\Big). \\
    \end{array}
    \]
    By using the identity $AB-CD = (A-C)B+C(B-D)$, for every $\{k_i\}_i$, we can consider some $A_i\in \C\{t\}$ such that $\prod_{i=1}^m\varphi_i^{k_i}-\prod_{i=1}^m(\varphi'_i)^{k_i} = \sum_{i=1}^m A_i(\varphi_i^{k_i}-(\varphi'_i)^{k_i})$. Moreover, since $\varphi_i-\varphi'_i$ divides $\varphi_i^{k_i}-(\varphi'_i)^{k_i}$, then we can consider some $B_i\in \C\{t\}$ such that $\sum_{i=1}^m A_i(\varphi_i^{k_i}-(\varphi'_i)^{k_i}) = \sum_{i=1}^m B_i(\varphi_i-\varphi'_i)$. Then we have 
    $$
    \begin{array}{ll}
        & \sum_{k_1,\dots,k_m} (\prod_{i=1}^m\binom{q_i}{k_i})\Big(\prod_{i=1}^m\varphi_i^{k_i}-\prod_{i=1}^m(\varphi'_i)^{k_i}\Big) \medskip\\
        =& \sum_{i=1}^m q_i (\varphi_i-\varphi'_i) + \sum_{\{k_j\}_j \neq e_i} (\prod_{i=1}^m\binom{q_i}{k_i})\Big(\prod_{i=1}^m\varphi_i^{k_i}-\prod_{i=1}^m(\varphi'_i)^{k_i}\Big) \medskip\\
        =& \sum_{i=1}^m q_i (\varphi_i-\varphi'_i) + \sum_{\{k_j\}_j \neq e_i} (\prod_{i=1}^m\binom{q_i}{k_i})\sum_{i=1}^m B_i(\varphi_i-\varphi'_i) \medskip\\
        =& \sum_{i=1}^m q_i (\varphi_i-\varphi'_i) + \sum_{i=1}^m (\varphi_i-\varphi'_i)C_i \text{ for some }C_i\in \C\{t\} \medskip\\
        =& \sum_{i=1}^m q_i (\varphi_i-\varphi'_i)(1+D_i) \text{ for some }D_i\in \C\{t\}\\
    \end{array}
    $$
    where $e_i$ is the canonical basis of $\R^m$. Now, we can write $\varphi_i-\varphi'_i = b_it^{\alpha_i}(1+G_i)$ for some $b_i\in \C$ and some $G_i \in \C\{t\}$. Note that the $b_i$'s can be $0$ if $\varphi_i=\varphi'_i$. Otherwise $\alpha_i = \ord(\varphi_i-\varphi'_i)$. Finally, we have obtained that
    $$(1+\varphi)^q - (1+\varphi')^q = \sum_{i=1}^m q_i b_it^{\alpha_i}(1+F_i)$$
    with $F\in \C\{t\}^m$ such that $1+F_i = (1+G_i)(1+D_i)$.
\end{proof}

If an element $q\in\N^m$ does not belong to $\Ncal(M_{\phi})$, then the order of $\varphi^q-(\varphi')^q$ jumps, that is, it is higher than $\langle q,\Tilde{v}\rangle$. Hence, we would like to have more information on the order when this jump occurs. This is exactly the content of the next proposition, which describes all the possible jumps of the order of $\varphi^q-(\varphi')^q$. More precisely, it states that these jumps occur along strata and that comparing the orders of two elements on the same stratum is the same as comparing the scalar products $\langle q,\Tilde{v}\rangle$.

\begin{prop}\label{Prop_ChainOfJumps}
    Let $(X,0)$ be a toric singularity and let $\Gamma\subseteq \Z^d$ be its associated semigroup. Let $\phi = (\varphi,\varphi') : (\C,0) \to (\Bar{X}\times \Bar{X},0)$ be an arc. Then there exists a chain of vector spaces $\R^m = V_0 \supsetneq V_1 \supsetneq \dots \supsetneq V_l \supsetneq V_{l+1} = \emptyset$ and there exists real numbers $\delta_0 < \delta_1 < \dots < \delta_{l} = +\infty$ such that, for all $q\in \N^m$ verifying $\ord\big(\varphi^q - (\varphi')^q\big) > \langle q,\tilde{v}\rangle$ and for all $k\in \{0,\dots,l\}$, we have
    $$
    \begin{array}{llll}
        && (1) & \ord\big(\varphi^q - (\varphi')^q\big) \geq \langle q,\tilde{v}\rangle + \delta_k \iff q\in V_k  \\
        &&&\\
        \text{and} && (2) & \ord\big(\varphi^q - (\varphi')^q\big) = \langle q,\tilde{v}\rangle + \delta_k \iff q\in V_k\setminus V_{k+1}
    \end{array}
    $$
\end{prop}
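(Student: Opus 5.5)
The plan is to factor out the leading monomials and pass to logarithms, so that the jump in order becomes a linear condition in $q$. Assume first that all $\varphi_k,\varphi'_k$ are non-zero. The degenerate case, where the exponent $\langle q,v\rangle$ or $\langle q,v'\rangle$ becomes infinite on the coordinates with $q_k>0$, is handled separately by restricting to the coordinates that actually occur. Write
\[
\varphi_k = a_k t^{v_k}(1+\psi_k), \qquad \varphi'_k = a'_k t^{v'_k}(1+\psi'_k), \qquad \psi_k,\psi'_k\in t\C\{t\}.
\]
Put $L_k := \log(1+\psi_k)$ and $L'_k := \log(1+\psi'_k)$; both lie in $t\C\{t\}$ and are convergent. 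Then for every $q\in\N^m$,
\[
\varphi^q = a^q t^{\langle q,v\rangle}\exp\big(\langle q,L\rangle\big),
\qquad
(\varphi')^q = (a')^q t^{\langle q,v'\rangle}\exp\big(\langle q,L'\rangle\big),
\]
where $\langle q,L\rangle=\sum_k q_kL_k$. This replaces the combinatorics of Lemma \ref{Lem_OnePlusPhiInBinomial}, whose functions $F_i$ secretly depend on $q$, by an expression that is linear in $q$.

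Now let $q$ satisfy $\ord(\varphi^q-(\varphi')^q) > \langle q,\tilde v\rangle$. By Remark \ref{Rmk_JumpOfValuationIFF}, $\langle q,v\rangle=\langle q,v'\rangle=\langle q,\tilde v\rangle$ and $a^q=(a')^q$. Hence
\[
\varphi^q-(\varphi')^q = a^q t^{\langle q,\tilde v\rangle}\exp\big(\langle q,L'\rangle\big)\Big(\exp\big(\langle q,L-L'\rangle\big)-1\Big).
\]
Here $a^q\neq 0$ and $\exp(\langle q,L'\rangle)$ is a unit, and since $\langle q,L-L'\rangle\in t\C\{t\}$ we get $\ord(\exp(u)-1)=\ord(u)$. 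Therefore
\[
\ord\big(\varphi^q-(\varphi')^q\big) = \langle q,\tilde v\rangle + \ord\,\langle q, L-L'\rangle .
\]
This identity is the heart of the proof.

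Expand $L-L'=\sum_{j\geq 1} c_j t^j$ with $c_j\in\C^m$. Then $\ord\langle q,L-L'\rangle = \min\{j \mid \langle q,c_j\rangle\neq 0\}$. Define the real subspaces
\[
W_j := \{x\in\R^m \mid \langle x,\operatorname{Re} c_i\rangle = \langle x,\operatorname{Im} c_i\rangle = 0 \ \text{for all } i<j\}.
\]
For real $q$, the condition $q\in W_j$ is equivalent to $\langle q,c_i\rangle=0$ for all $i<j$. The $W_j$ form a decreasing chain with $W_1=\R^m$, and they stabilize because the dimension is finite.

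Let $\delta_0<\delta_1<\dots<\delta_{l-1}$ be the indices $j$ at which the chain strictly drops, that is $W_{j+1}\subsetneq W_j$; equivalently, $c_j\notin$ the span of the earlier $c_i$ over the relevant real forms. Set $V_0=\R^m$ and $V_k := W_{\delta_{k-1}+1}$ for $1\leq k\leq l$, so that $V_l=\bigcap_j W_j$. Put $\delta_l=+\infty$ and $V_{l+1}=\emptyset$. With this choice, for such $q$:
\begin{itemize}
\item $\ord\langle q,L-L'\rangle\geq\delta_k$ holds iff $\langle q,c_i\rangle=0$ for all $i<\delta_k$, iff $q\in W_{\delta_k}=V_k$, since no drop occurs strictly between $\delta_{k-1}$ and $\delta_k$.
\item The order equals $\delta_k$ exactly when moreover $\langle q,c_{\delta_k}\rangle\neq0$, that is $q\in V_k\setminus V_{k+1}$.
\item The value $+\infty$ corresponds to $q\in V_l$.
\end{itemize}
Together with the displayed identity, these give (1) and (2).

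The main obstacle is bookkeeping rather than depth. First, the indexing of the chain must be set up so that the inequality version (1) and the equality version (2) hold simultaneously; this includes arranging that $V_0=\R^m$ matches $\delta_0$, the first index with $c_j\neq 0$. Second, the degenerate arcs need care: either some $\varphi_k\equiv0$, or $L=L'$ identically, in which case $l=0$ and $\delta_0=+\infty$. In the degenerate-arc case I would restrict to the coordinate face where the relevant $q_k$ vanish and rerun the argument there.
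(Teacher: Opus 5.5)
Your proof is correct, and it takes a genuinely different route from the paper's. The paper expands $(1+\psi)^q-(1+\psi')^q$ with Lemma~\ref{Lem_OnePlusPhiInBinomial} to read off the first jump $\delta_0=\min_i\ord(\psi_i-\psi'_i)$ and the kernel condition $\sum_{i\in I_1}q_ib_i=0$ defining $V_1$. On that kernel it eliminates one coordinate $q_{i_0}$ via the reparametrization $1+\tilde\psi_i=(1+\psi_i)(1+\psi_{i_0})^{-h_i/h_{i_0}}$, reapplies the lemma, iterates, and finally deletes repeated subspaces to force strict inclusions.

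You instead linearize once and for all with the logarithm. This yields the exact identity $\ord(\varphi^q-(\varphi')^q)=\langle q,\tilde v\rangle+\ord\langle q,L-L'\rangle$ for every jumping $q$. The chain then becomes explicit: $V_k$ is the real annihilator of $\{\operatorname{Re}c_i,\operatorname{Im}c_i\}_{i<\delta_k}$, and the $\delta_k$ are the indices where this annihilator drops. What this buys:
\begin{itemize}
\item a closed-form description of the $V_k$ and $\delta_k$ in terms of the Taylor coefficients of $\log(1+\psi)-\log(1+\psi')$;
\item strictness of the inclusions and of $\delta_0<\delta_1<\cdots$ for free;
\item no reliance on auxiliary series that depend on $q$, as the $F_i$ of Lemma~\ref{Lem_OnePlusPhiInBinomial} do;
\item automatic handling of the case where the paper's kernel $E_1=\ker\langle\cdot,c\rangle$, with $c\in\C^m$, has real codimension two, which the paper's single-vector elimination absorbs only through its final pruning.
\end{itemize}
Conversely, the paper's real-power reparametrizations are in effect partial logarithmic changes of coordinates. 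Its argument is thus a step-by-step version of the same mechanism, kept close to the one-dimensional computation of \cite{BrianconGallicoGranger1980DeformationsEqu}.

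Your handling of arcs with some $\varphi_k\equiv 0$ or $\varphi'_k\equiv 0$ is only sketched, but it closes in one line. Suppose $q_k>0$ for such a $k$. Either both $\varphi^q$ and $(\varphi')^q$ vanish, so $\langle q,\tilde v\rangle=+\infty$ and the jump hypothesis fails; or exactly one vanishes, and the order equals $\langle q,\tilde v\rangle$. Hence every jumping $q$ is supported where both components are nonzero, and you may set $c_{j,k}=0$ for the remaining $k$ while still working in $\R^m$.
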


\begin{rmk}
    Since $V_l$ can be $0$, we always have $\delta_l = +\infty$.
\end{rmk}

\begin{proof}
    First, since we have assumed that $\ord\big(\varphi^q - (\varphi')^q\big) > \langle q,\tilde{v}\rangle$, then we can write
    \[
    \begin{array}{lll}
        \varphi^q - (\varphi')^q & = & \prod_{i=1}^m a_i^{q_i}t^{q_iv_i}(1+\psi_i)^{q_i} - \prod_{i=1}^m (a_i')^{q_i}t^{q_iv'_i}(1+\psi'_i)^{q_i} \medskip\\
         & = & a^q t^{\langle q,v \rangle} \Big[\prod_{i=1}^m (1+\psi_i)^{q_i} - \prod_{i=1}^m (1+\psi'_i)^{q_i} \Big] \medskip\\
         & = & a^q t^{\langle q,v \rangle} \Big[ (1+\psi)^{q} - (1+\psi')^{q} \Big].
    \end{array}
    \]
    By Lemma \ref{Lem_OnePlusPhiInBinomial}, we can write
    \[
    \varphi^q - (\varphi')^q = a^q t^{\langle q,v \rangle}\sum_{i=1}^m q_i b_it^{\alpha_i}(1+F_i).
    \]
    where $\psi_i-\psi'_i = b_it^{\alpha_i}(1+G_i)$ for some $G_i\in \C\{t\}$. Let $\delta_0 := \min_i\{\alpha_i\}$, then we have that $\ord\big(\varphi^q - (\varphi')^q\big) \geq \langle q,v \rangle + \delta_0$ and that this inequality is strict if and only if $\sum_{i\in I_1}q_ib_i = 0$ where $I_1 = \big\{i \mid \alpha_i = \delta_0 \big\}$. If $\delta_0 = +\infty$, then $l=0$ and the theorem holds. Otherwise we have that the element $c\in \C^m$ defined by $c_i = b_i$ if $i\in I_1$ and $c_i = 0$ if $i\notin I_1$, is non zero. The condition for having a strict inequality is equivalent to asking for $q$ to be an element of the kernel of the linear map $\langle .,c \rangle : x \mapsto \langle x,c \rangle$ which goes from $\R^m$ to $\C$. Let $E_1$ be $\ker(\langle .,c \rangle)$, which is a proper vector subspace of $\R^m$ since $c\neq 0$. Then, by setting $V_0=\R^m$ and $V_1 = E_1$, we have
    $$ \ord\big(\varphi^q - (\varphi')^q\big) = \langle q,v \rangle + \delta_0 \iff q\in V_0\setminus V_1$$
    Let us see now what happens to the order on $E_1$. Since we have $E_1^\perp \neq 0$, then we can consider a non zero element $h \in E_1^\perp$ such that, for every $q\in E_1$, we have $\sum_{i=1}^m q_i h_i = 0$. Since $h\neq 0$, we can consider $i_0$ such that $q_{i_0} = \sum_{i=1}^m (-h_i/h_{i_0})q_i$. For every $i\neq i_0$, let $\Tilde{\psi_i} \in \C\{t\}$ be such that 
    \[
    1+\Tilde{\psi_i} = (1+\psi_i)(1+\psi_{i_0})^{-h_i/h_{i_0}}
    \]
    and write $\Tilde{q} = (q_1,\dots,q_{i_0-1},q_{i_0+1},\dots,q_m)$. Then we have $(1+\psi)^{q} - (1+\psi')^{q} = (1+\Tilde{\psi})^{\Tilde{q}} - (1+\Tilde{\psi'})^{\Tilde{q}}$. So, by Lemma \ref{Lem_OnePlusPhiInBinomial}, we can write  
    \[
    \varphi^q - (\varphi')^q = a^q t^{\langle q,v \rangle}\sum_{i=1, i\neq i_0}^m q_i \Tilde{b_i}t^{\beta_i}(1+\Tilde{F_i}).
    \]
    where $\Tilde{\psi_i}-\Tilde{\psi'_i} = \Tilde{b_i}t^{\beta_i}(1+\Tilde{G_i})$ for some $\Tilde{G_i}\in \C\{t\}$.
    Let $\delta_1 := \min_i\{\beta_i\}$, then we have that $\ord\big(\varphi^q - (\varphi')^q\big) \geq \langle q,v \rangle + \delta_1$ and that this inequality is strict if and only if $\sum_{i\in I_2}q_i\Tilde{b_i} = 0$ where $I_2 = \big\{i \mid \beta_i = \delta_1 \big\}$. If $\delta_1 = +\infty$, then the theorem holds with $l=1$. Otherwise, the element $\Tilde{c}\in \C^{m-1}$ defined by $\Tilde{c_i} = \Tilde{b_i}$ if $i\in I_2$ and $\Tilde{c_i} = 0$ if $i\notin I_2$, is non zero. The condition for having a strict inequality is equivalent to asking for $\Tilde{q}$ to be an element of the kernel of the linear map $\langle .,\Tilde{c} \rangle : x \mapsto \langle x,\Tilde{c} \rangle$ which goes from $\R^{m-1}$ to $\C$. Let $E_2$ be $\ker(\langle .,\Tilde{c} \rangle)$, which is a proper vector subspace of $\R^{m-1}$ since $\Tilde{c}\neq 0$. Then, by setting $V_0=\R^m$, $V_1 = E_1$ and $V_2 = E_1\cap(E_2\oplus \R e_{i_0})$ we have
    \[
    \ord\big(\varphi^q - (\varphi')^q\big) \geq \langle q,v\rangle + \delta_k \iff q\in V_k 
    \]
    and
    \[
    \ord\big(\varphi^q - (\varphi')^q\big) = \langle q,v\rangle + \delta_k \iff q\in V_k\setminus V_{k+1}
    \]
    for $k=0,1$. Now, we can continue this process, by taking a non zero element of $E_2^\perp$ in $\R^{m-1}$. We know that at some point it will stop because the $E_i$'s are of smaller and smaller dimension. Then we will obtain a chain 
    $\R^m = V_0 \supseteq V_1 \supseteq \dots \supseteq V_l$ and some positive numbers $\delta_i$'s with $\delta_l = +\infty$ verifying proprieties $(1)$ and $(2)$. Note that if, for some $k$, we have $V_k = V_{k+1}$, then property $(2)$ is empty, and we can just remove $V_k$ from the chain. By doing so, we obtain strict inclusions between the $V_k$'s.  Moreover, those properties imply, for all $k$, that $\delta_{k} < \delta_{k+1}$.
\end{proof}

\begin{rmk}
By applying Propositions \ref{Prop_ElementInMphiAreOk} and \ref{Prop_ChainOfJumps} to elements $p_1,\dots,p_n\in \N^m$ such that $\{\pi(p_i)\}_i$ are generators of $\Gamma$, we can describe, for a given arc $\phi$, the order of the pullback of $I_{\Delta}$ as follows:
\[
\ord\big(\phi^*(I_{\Delta})\big) = \min_k\Big\{ \min_{p_i\in\Ncal(M_\phi)}\{\langle p_i,\Tilde{v}\rangle\}; \min_{p_i\notin\Ncal(M_\phi), p_i\in V_k\setminus V_{k+1}}\{\langle p_i,\Tilde{v}\rangle + \delta_k\}\Big\}
\]
\end{rmk}

We now prove the main theorem of this paper, which gives the description of the semigroup of the Lipschitz saturation of a toric singularity.

\begin{theorem}\label{Th_Main}
    Let $(X,0)$ be a toric singularity, let $\Gamma\subseteq \Z^d$ be its associated semigroup, and let $\sigma$ be the cone generated by $\Gamma$. Let $p_1,\dots,p_n \in \N^m$ be such that $\{\pi(p_i)\}_{i}$ are generators of $\Gamma$ and let $q\in \N^m$. Then the following statements are equivalent:
    \begin{enumerate}
        \item The element $\pi(q)$ belongs to $\Gamma^s$.
        \item For all $I \subseteq \{1,\dots,n\}$ such that $\pi(q)\notin \Ncal_{\sigma}(\pi(p_i) \mid i\in I)$, then $q\in \displaystyle{\sum_{i\notin I} p_i\Z} \cup \{0\}$.
    \end{enumerate}
\end{theorem}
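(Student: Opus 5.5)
The implication $(1)\Rightarrow(2)$ is the contrapositive of Proposition \ref{Prop_CriterionToNotBelongInGammas}. Suppose some $I$ satisfies $\pi(q)\notin\Ncal_\sigma(\pi(p_i)\mid i\in I)$ and $q\notin\sum_{i\notin I}p_i\Z\cup\{0\}$. Then $q\neq 0$, and Proposition \ref{Prop_CriterionToNotBelongInGammas} gives $\pi(q)\notin\Gamma^s$. The case $q=0$ is trivial, since the constant monomial lies in $\Gamma^s$. So all the work is in $(2)\Rightarrow(1)$.

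For $(2)\Rightarrow(1)$ I would use the valuative criterion of Remark \ref{Rmk_ValuativeCriterionFroMonomials}. Fix an arc $\phi=(\varphi,\varphi')$ and aim to show $\ord(\varphi^q-(\varphi')^q)\geq \min_i \ord(\varphi^{p_i}-(\varphi')^{p_i})$.

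Let $J:=\{i\mid p_i\in M_\phi\}$. If $\pi(q)\in\Ncal_\sigma(\pi(p_i)\mid i\in J)$, I want to conclude as in Proposition \ref{Prop_ElementInMphiAreOk}. The inequality there is really only used through pairings with $v,v'$. For an arc on $\Bar X$ these pairings factor as $\langle p,v\rangle=\langle \pi(p),w\rangle$ with $w\in\sigma^\vee$ the valuation vector of the arc, which holds because the normalization is smooth. Lemma \ref{Lem_NewtonPolCaracScalarProd} applied in $\Z^d$ then gives $\langle q,\tilde v\rangle\ge\min_{i\in J}\langle p_i,\tilde v\rangle$, and we are done.

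Otherwise, apply (2) with $I=J$: we get $q\in\sum_{i\notin J}p_i\Z$, so $q$ lies in the lattice spanned by the generators whose order jumps. Every such $p_i$ satisfies $\ord>\langle p_i,\tilde v\rangle$. By Remark \ref{Rmk_JumpOfValuationIFF}, each $p_i$ with $i\notin J$ lies in $(v-v')^\perp$ and has $a^{p_i}=(a')^{p_i}$. Hence $q$ also lies in $(v-v')^\perp$ and has $a^q=(a')^q$, by the multiplicativity argument in Lemma \ref{Lem_SolutionSystem}. So $q$ itself jumps, and Proposition \ref{Prop_ChainOfJumps} applies to $q$ and to all $p_i$ with $i\notin J$. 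The spaces $V_k$ are vector spaces. Taking the largest $k$ with $q\in V_k\setminus V_{k+1}$, some $p_i$ with $i\notin J$ must lie outside $V_{k+1}$, since otherwise their $\Z$-span would be contained in $V_{k+1}$. That $p_{i}$ sits in some $V_{k'}\setminus V_{k'+1}$ with $k'\le k$, so its order is $\langle p_{i},\tilde v\rangle+\delta_{k'}\le\langle p_i,\tilde v\rangle+\delta_k$.

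The main obstacle is comparing $\langle q,\tilde v\rangle$ with $\langle p_i,\tilde v\rangle$: lattice membership alone does not control the degree. My plan is to make a better choice of $I$. Order the indices by the order of $\varphi^{p_i}-(\varphi')^{p_i}$, and let $\mu$ be the minimum order over all generators. Take
\[
I:=\{i\mid \langle p_i,\tilde v\rangle\le \mu-\delta\}
\]
for the appropriate jump level $\delta$, stratum by stratum. Then check that either $\pi(q)$ lies in the Newton polyhedron of $I$, giving $\langle q,\tilde v\rangle\ge\min_I\langle p_i,\tilde v\rangle$, or $q$ lies in the span of the complement. In the second case I would argue that $q$ belongs to the corresponding deep stratum $V_k$, whose $\delta_k$ compensates the degree.

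Concretely, I would run an induction on the strata $V_0\supsetneq V_1\supsetneq\cdots$ of Proposition \ref{Prop_ChainOfJumps}. At level $k$ set $I_k=\{i\mid p_i\notin V_k\}\cup J$; these index sets are increasing in $k$. Hypothesis (2) applied to each $I_k$ then says that either the Newton polyhedron bound holds for generators in shallower strata, which pays at their smaller jump, or $q$ lies in the span of the generators in $V_k$, hence $q\in V_k$ and gains $\delta_k$. Combining the two alternatives at the first level where the polyhedron condition holds should give the required inequality. This step, matching the degree bound with the stratum level, is where I expect the real difficulty.
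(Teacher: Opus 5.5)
Your plan follows the paper's proof. The steps match: the valuative criterion, then the case $\pi(q)$ in the Newton polyhedron of $M_\phi$, then (2) forcing $q$ into the lattice spanned by jumping generators so that $q$ itself jumps, then Proposition~\ref{Prop_ChainOfJumps} with a second application of (2) to the generators lying outside a stratum subspace. What you leave open is the final comparison. It is not a real obstacle once the second application of (2) is made at $q$'s own stratum.

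Let $k_0$ be such that $q\in V_{k_0}\setminus V_{k_0+1}$, and take $I=\Lambda_{k_0}:=\{i\mid p_i\notin V_{k_0+1}\}$; adding your $J$ changes nothing. Suppose first that $q\in\Ncal(p_i\mid i\in\Lambda_{k_0})$. Then
\[
\ord\big(\varphi^q-(\varphi')^q\big)=\langle q,\tilde v\rangle+\delta_{k_0}\ \geq\ \min_{i\in\Lambda_{k_0}}\big\{\langle p_i,\tilde v\rangle+\delta_{k_0}\big\}\ \geq\ \min_{i\in\Lambda_{k_0}}\ord\big(\varphi^{p_i}-(\varphi')^{p_i}\big).
\]
The last inequality holds because each $p_i\notin V_{k_0+1}$ either does not jump or jumps by some $\delta_{k'}$ with $k'\leq k_0$. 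Otherwise, (2) places $q$ in the $\Z$-span of generators lying in the vector space $V_{k_0+1}$, which contradicts $q\notin V_{k_0+1}$. So the degree control you were worried about comes entirely from the Newton-polyhedron alternative. The lattice alternative never has to bound degrees, because at this level it cannot occur.

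Your \emph{first level} scheme with the increasing sets $I_k$ also closes, but note the index shift by one. Let $k^*\geq 1$ be the least $k$ with $q\in\Ncal(p_i\mid i\in I_k)$. Failure at level $k^*-1$ gives $q\in V_{k^*-1}$, hence a gain of $\delta_{k^*-1}$. Meanwhile every generator indexed by $I_{k^*}$ jumps by at most $\delta_{k^*-1}$, so the two sides match. The intermediate idea $I:=\{i\mid\langle p_i,\tilde v\rangle\le\mu-\delta\}$ is unnecessary and can be dropped.
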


\begin{proof}
    The implication $1\implies 2$ is the contraposition of Proposition \ref{Prop_CriterionToNotBelongInGammas}, so we just have to prove $2\implies 1$. Let $q\in \N^m$ be an element verifying $2.$, and let $(\varphi, \varphi') : (\C,0) \to (\overline{X}\times \overline{X},0)$ be an arc. We want to prove 
    \[
    \ord(\varphi^q - (\varphi')^q) \geq \min_{i=1,\dots,n}\big\{\ord(\varphi^{p_i} - (\varphi')^{p_i})\big\}
    \]
    First, remark that, for every set $I$, the condition $\pi(q)\notin \Ncal_{\sigma}(\pi(p_i) \mid i\in I)$ implies $q\notin \Ncal(p_i \mid i\in I)$. By Proposition \ref{Prop_ElementInMphiAreOk}, we can assume $q\notin \Ncal(M_{\phi})$. Let us consider $I\subseteq \{1,\dots,n\}$ such that $i\in I$ if and only if $p_i\in \Ncal(M_\phi)$ and let $J=\{1,\dots,n\}\setminus I$. Then $q\notin \Ncal(p_i \mid i\in I)$ and, by $2.$, we get $q\in \sum_{j\in J} p_j\Z \cup \{0\}$. Since, for all $j\in J$, we have $p_j \notin M_\phi$, then $\{p_j\}_j\subset (v-v')^\perp$, by remark \ref{Rmk_JumpOfValuationIFF}. So $q\in \sum_{j\in J} p_j\Z \cup \{0\}$ implies that $q\in (v-v')^\perp$. By Lemma \ref{Lem_SolutionSystem}, it also implies that $a^q-(a')^q=0$ and so we have $\ord(\varphi^q - (\varphi')^q) > \langle q,\tilde{v}\rangle$. By applying Proposition \ref{Prop_ChainOfJumps}, we can consider a chain $\R^m = V_0 \supsetneq \dots \supsetneq V_{l} \supsetneq V_{l+1}=\emptyset $ of vector spaces and a sequence of numbers $ \delta_0 < \dots < \delta_{l} = +\infty$ such that, for all $k\in \{0,\dots,l\}$, we have
    \[
    \ord\big(\varphi^q - (\varphi')^q\big) \geq \langle q,\tilde{v}\rangle + \delta_k \iff q\in V_k 
    \]
    and
    \[
    \ord\big(\varphi^q - (\varphi')^q\big) = \langle q,\tilde{v}\rangle + \delta_k \iff q\in V_k\setminus V_{k+1}.
    \]
    
    Let us consider $k_0$ such that $q\in V_{k_0}\setminus V_{k_0+1}$ and let $\Lambda_{k_0} \subseteq \{1,\dots,n\}$ be such that $i\in \Lambda_{k_0}$ if and only if $p_i \notin V_{k_0+1}$. If $q \in \Ncal(p_i \mid i\in \Lambda_{k_0})$, then, by Lemma \ref{Lem_NewtonPolCaracScalarProd}, we get 
    $$ 
    \ord(\varphi^q - (\varphi')^q) \geq \langle q,v \rangle + \delta_{k_0} \geq \min_{i\in \Lambda_{k_0}}\big\{\langle p_i,v \rangle + \delta_{k_0} \big\} \geq \min_{i\in \Lambda_{k_0}} \big\{ \ord(\varphi^{p_i} - (\varphi')^{p_i}) \big\}
    $$
    and so $\pi(q)\in \Gamma^s$. If $q \notin \Ncal(p_j \mid j\in \Lambda_{k_0})$, then, by $2.$, we get $q\in \sum_{j\notin \Lambda_{k_0}} p_j\Z \cup \{0\}$. In the case where $\{ p_j \mid j\notin \Lambda_{k_0}\}$ is empty, then $q = 0$, otherwise we get a contradiction to the assumption $q\notin V_{k_0+1}$, since $\{ p_j \mid j\notin \Lambda_{k_0}\} \subset V_{k_0+1}$ by definition of $\Lambda_{k_0}$.
\end{proof}

\begin{ex}\label{Ex_KeyEx}
    Let us consider the semigroup generated by $p_1,\dots,p_7 \in \N^2$ as in Figure \ref{fig:KeyExample}. Note that its saturation is $\N^2$, so $\pi(p_i)=p_i$. First, by Campillo's criterion, we have $\sum_{i=1,2,3,7} p_i\Z = (2,0)\Z\oplus(0,2)\Z$ on $\Ncal(p_3)$ and $\sum_{i=1,2,4} p_i\Z = (1,0)\Z\oplus(0,4)\Z$ on $\Ncal(p_4)$.
    Now, let us consider $q:=(10,8)$. This element is not in $\Ncal(p_3,p_7,p_5,p_6)$, but it belongs to $\sum_{i=1,2,4} p_i\Z$. It is also not in $\Ncal(p_4,p_5,p_6) = \Ncal(p_4)$, but it belongs to $\sum_{i=1,2,3,7} p_i\Z$. Then, by Theorem \ref{Th_Main}, we get $q\in \Gamma^s$. This was in fact the only missing element of $\sum_{i=1,2,4}p_i\Z \cap \sum_{i=1,2,3,7} p_i\Z$ on $\Ncal(p_3,p_4) \setminus (\Ncal(p_3)\cup\Ncal(p_4))$, so we have computed the Lipschitz saturation of the semigroup $\langle p_1,\dots, p_7\rangle$.\\
    
    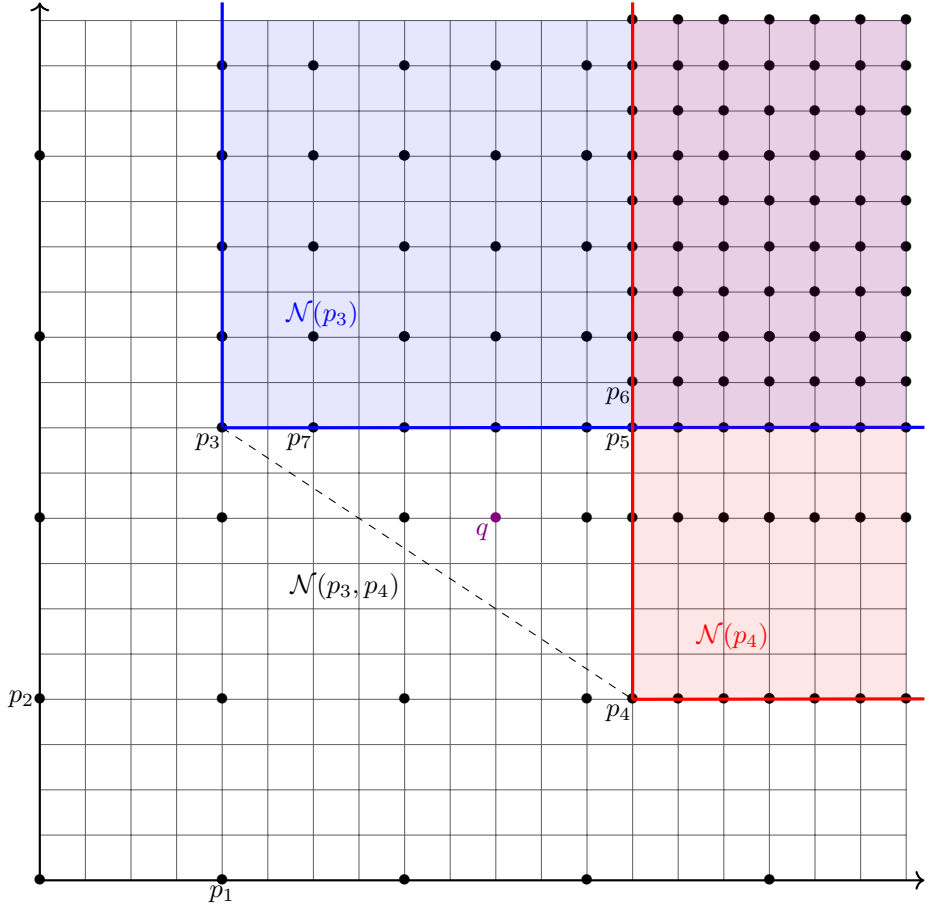
\begin{figure}[h]
    \centering
    \begin{tikzpicture}[scale=0.6]

    \def \xmax {19};
    \def \ymax {19};
    
    \draw[opacity = 0.6] (0,0) grid (\xmax,\ymax);

    \def \pun {4};
    \def \pdeux {4};

    \node[] at (\pun,-0.3) {$p_1$};
    \node[] at (-0.4,\pdeux) {$p_2$};
    
        \foreach \j in {0,...,4}{%
            \foreach \i in {0,...,4}{%
            \node at (\pun*\i,\pdeux*\j) {$\bullet$};
            }
        }
    
    \def \xtrois {4};
    \def \ytrois {10};
    \node at (\xtrois-0.3,\ytrois-0.3) {$p_3$};
    \foreach \j in {0,...,4}{%
            \foreach \i in {0,...,4}{%
            \node at (\xtrois+2*\i,\ytrois+2*\j) {$\bullet$};
            }
        }

    \def \xsept {6};
    \def \ysept {10};
    \node at (\xsept-0.3,\ysept-0.3) {$p_7$};
    
    \def \xquatre {13};
    \def \yquatre {4};
    \node at (\xquatre-0.3,\yquatre-0.3) {$p_4$};
    \foreach \j in {0,...,2}{%
            \foreach \i in {0,...,6}{%
            \node at (\xquatre+\i,\yquatre+4*\j) {$\bullet$};
            }
        }
    
    \def \xcinq {\xquatre};
    \def \ycinq {\ytrois};
    \def \xsix {\xquatre};
    \def \ysix {\ytrois+1};
    \node at (\xcinq-0.3,\ycinq-0.3) {$p_5$};
    \node at (\xsix-0.3,\ysix-0.3) {$p_6$};
    \foreach \j in {0,...,9}{%
            \foreach \i in {0,...,6}{%
            \node at (\xcinq+\i,\ycinq+\j) {$\bullet$};
            }
        }
    
    \node[color = blue] at (\xtrois+2.2,\ytrois+2.5) {$\Ncal(p_3)$};
    \draw[color = blue, very thick] (\xtrois,\ytrois) -- (\xmax+0.4,\ytrois+0.01);
    \draw[color = blue, very thick] (\xtrois,\ytrois) -- (\xtrois,\ymax+0.4);
    \fill [fill=blue, fill opacity=0.1] (\xtrois,\ytrois) rectangle (\xmax,\ymax);

    \node[color = red] at (\xquatre+2.2,\yquatre+1.4) {$\Ncal(p_4)$};
    \draw[color = red, very thick] (\xquatre,\yquatre) -- (\xmax+0.4,\yquatre+0.01);
    \draw[color = red, very thick] (\xquatre,\yquatre) -- (\xquatre,\ymax+0.4);
    \fill [fill=red, fill opacity=0.1] (\xquatre,\yquatre) rectangle (\xmax,\ymax);
    
    \draw[dashed, -] (\xtrois,\ytrois) -- (\xquatre,\yquatre);
    \node at (6.7,6.5) {$\Ncal(p_3,p_4)$};

    \node[color = violet] at (10-0.3,8-0.3) {$q$};
    \node[color = violet] at (10,8) {$\bullet$};
    
    \draw[thick, ->] (0,0) -- (\xmax+0.4,0);
    \draw[thick, ->] (0,0) -- (0,\ymax+0.4);

    \end{tikzpicture}

    \caption{Illustration of Example \ref{Ex_KeyEx}}
    \label{fig:KeyExample}

\end{figure}
\end{ex}

As a consequence of the main theorem, we recover Campillo’s criterion.
\begin{cor}
   Let $m, k_1,\dots, k_r, l_1,\dots l_s\in \Gamma$, such that
\[
m-k_i\in \overline{\Gamma},\text{ }m-l_j\in \overline{\Gamma},\text{ and }\sum_i k_i - \sum_j l_j \in \overline{\Gamma},
\]
for all $i\in \{1,\ldots,r\}$ and $j\in\{1,\ldots,s\}$,then $m + \sum_i k_i - \sum_j l_j \in \Gamma^s$. 
\end{cor}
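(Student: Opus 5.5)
We derive the corollary from Theorem \ref{Th_Main} by verifying condition 2 for the element $w := m + \sum_i k_i - \sum_j l_j$. We work, as the theorem does, in the setting of smooth normalization, so that $\overline{\Gamma} = \N^d$, $\pi$ is the identity and $p_i = \pi(p_i)$. Enlarge the generating set of $\Gamma$ so that it contains $m$, all $k_i$ and all $l_j$. This is harmless: condition 2 is only required for subsets $I$ of the chosen generating set, and $\Gamma^s$ does not depend on that choice. Write $P$ for the generator list, and let $I \subseteq P$ be any subset with $w \notin \Ncal_\sigma(P_I)$. We must show that $w \in \sum_{p \notin I} p\Z \cup \{0\}$.

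The key observation is that every element of $\{m, k_1,\dots,k_r,l_1,\dots,l_s\}$ lies outside $I$. We argue this by contradiction, in three cases.
\begin{itemize}
\item Suppose $m \in I$. Since $\sum_i k_i - \sum_j l_j \in \overline{\Gamma} = \sigma^\vee \cap \Z^d$, we get $w \in m + \sigma \subseteq \Ncal_\sigma(P_I)$ (here $\sigma$ denotes the cone generated by $\Gamma$, as in the theorem). This is a contradiction.
\item Suppose some $k_{i_0} \in I$. Then $w = k_{i_0} + (m - k_{i_0}) + \sum_{i \neq i_0} k_i - \sum_j l_j$, and we must check that the remainder lies in the cone. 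Rewrite it as $(m-k_{i_0}) + \big(\sum_i k_i - \sum_j l_j\big) - k_{i_0} + k_{i_0}$, so that $w - k_{i_0} = (m-k_{i_0}) + \big(\sum_i k_i - \sum_j l_j\big)$. Both summands lie in $\overline{\Gamma}$, hence $w \in k_{i_0} + \sigma$, a contradiction.
\item Suppose some $l_{j_0} \in I$. Then $w - l_{j_0} = (m - l_{j_0}) + \big(\sum_i k_i - \sum_j l_j\big) \in \overline{\Gamma}$, again placing $w$ in $\Ncal_\sigma(P_I)$, a contradiction.
\end{itemize}

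So none of $m, k_i, l_j$ lies in $I$. Therefore $w = m + \sum_i k_i - \sum_j l_j$ is an integer combination of generators not in $I$, that is, $w \in \sum_{p\notin I} p\Z$. Since $I$ was arbitrary, condition 2 of Theorem \ref{Th_Main} holds, and the theorem gives $w \in \Gamma^s$.

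The main obstacle is bookkeeping. First, one must justify that adding $m, k_i, l_j$ to the generating list is legitimate, since the theorem quantifies over subsets of a fixed list. The resolution is that the theorem holds for any finite generating list, because $\Gamma^s$ is intrinsic. Second, outside the smooth case one would need lifts $q, p_i \in \N^m$ with $q = p_m + \sum p_{k_i} - \sum p_{l_j}$, which need not lie in $\N^m$. I expect to handle this by restricting to the smooth-normalization setting, which is where $\Gamma^s$ was defined.
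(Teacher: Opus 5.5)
Your proof is correct and is essentially the paper's argument. Both show that $w$ lies in the Newton polyhedron of every generator used to write $m$, the $k_i$ and the $l_j$. Hence any $I$ with $w\notin\Ncal_\sigma(P_I)$ must avoid those generators, and $w$ is then an integer combination of generators outside $I$.

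The only difference is bookkeeping. You enlarge the generating list, which is legitimate because $\Gamma^s$ is intrinsic. The paper instead keeps a fixed list, expands $m,k_i,l_j$ in it with positive coefficients, and uses the chain $w\in m+\sigma\subseteq k_i+\sigma\subseteq p_u+\sigma$ (and similarly for the $l_j$). You should also record, as the paper does in its first line, that $w\in\Gamma+\overline{\Gamma}\subseteq\overline{\Gamma}$, so that $q=w$ is an admissible input to the theorem.
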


\begin{proof}
    Since $m\in\Gamma$ and $\sum_i k_i - \sum_j l_j \in \overline{\Gamma}$, we obtain $m+\sum_i k_i - \sum_j l_j \in \overline{\Gamma}$. Now, consider $\{p_1,\ldots,p_n\}$ a set of generators of $\Gamma$. Since $m, k_1,\dots, k_r, l_1,\dots l_s\in \Gamma$, there exist subsets $I_0,I_1,\ldots,I_r,J_1,\ldots,J_s$ of $\{1,\ldots,n\}$ such that $m=\sum_{v\in I_0}c_{v}p_{v}$, $k_i=\sum_{u\in I_i}b_{i,u}p_{u}$ and $l_j=\sum_{t\in J_j}a_{j,t}p_{t}$, where $\{c_v,b_{i,u},a_{j,t}\}\subset\mathbb{Z}_{>0}$. In particular, $m\in\mathcal{N}(p_v)$ for all $v\in I_0$, $k_i\in\mathcal{N}(p_u)$ for all $u\in I_i$ and $l_j\in\mathcal{N}(p_t)$ for all $t\in J_j$.
    Now, since $m-k_i\in \overline{\Gamma}$ and $m-l_j\in \overline{\Gamma}$ for all $i,j$, it follows that $m\in\mathcal{N}(k_i)$ and $m\in\mathcal{N}(l_j)$ for all $i,j$. Thus, $m\in\mathcal{N}(p_i)$ for all $i\in \bigcup_{i=0}^rI_i\cup\bigcup_{j=1}^{s}J_j$.
    Since $\sum_i k_i - \sum_j l_j \in \overline{\Gamma}$, we obtain that $m+\sum_i k_i - \sum_j l_j \in \mathcal{N}(m)$. Therefore, $m+\sum_i k_i - \sum_j l_j \in \mathcal{N}(p_i)$ for all $i\in \bigcup_{i=0}^rI_i\cup\bigcup_{j=1}^{s}J_j$.
    Let $K\subset \{1,\ldots,n\}$ be such that $m+\sum_i k_i - \sum_j l_j \notin \mathcal{N}(p_i\mid i\in K)$. It follows that $(\bigcup_{i=0}^rI_i\cup\bigcup_{j=1}^{s}J_j)\bigcap K= \emptyset$. Thus,
    $$m+\sum_i k_i - \sum_j l_j=\sum_{v\in I_0}c_{v}p_{v}+\sum_i \sum_{u\in I_i}b_{i,u}p_{u} - \sum_j \sum_{t\in J_j}a_{j,t}p_{t}\in \sum_{i\in\bar{I}}p_i\mathbb{Z}\cup\{0\}\subset\sum_{i\notin K}p_i\mathbb{Z}\cup\{0\}.$$
    Hence, $m+\sum_i k_i - \sum_j l_j$ holds $(2)$ in theorem \ref{Th_Main}, obtaining that $m+\sum_i k_i - \sum_j l_j\in\Gamma^s$.
\end{proof}



We know from Proposition \ref{Prop_CampilloSat} that Campillo's saturation is always included in the Lipschitz saturation, even for general algebras, and that the two coincide for complex curves.  We conclude this paper by providing the following example which shows that these two notions do not coincide for complex varieties of higher dimension.\vspace{1cm}

\begin{ex}[\textbf{Campillo saturation and Lipschitz saturation are different}]
    Let $(X,0)$ be a toric singularity such that $\Ocal_{X,0} \simeq \C\{u,v^3,v^4,v^5,u^3v,u^3v^2\} \subset \C\{u,v\} \simeq \Ocal_{\overline{X},0}$. By Theorem \ref{Th_Main}, one can verify that $u^2v^2\in \Ocal_{X^s,0}$. In order to verify that $u^2v^2$ is not in Campillo's saturation, let us consider $p,p_1,\dots, p_n,q_1,\dots,q_m \in \Ocal_{X,0}$ with $p/p_i, p/q_i$ and $\prod_i p_i/\prod_j q_j  \in \C\{u,v\}$ such that $p\times\prod_i p_i/\prod_j q_j = u^2v^2$. Since $\C\{u,v\}$ is a UFD, we get $p=\text{unit}\times u^\alpha v^\beta$ with $\alpha,\beta\in \{0,1,2\}$. However the condition $p\in \Ocal_{X,0}$ imposes $\beta = 0$. But since, for all $i$, we have $p_i \mid p$ and $q_i\mid p$ in $\C\{u,v\}$, then $v \nmid p_i$ and  $v\nmid q_i$. Hence, having $p\times\prod_i p_i/\prod_j q_j = u^2v^2$ is not possible.
\end{ex}

\subsection{A generating set of the semigroup $\Gamma^s$}

The main theorem provides a combinatorial method for computing the semigroup associated with the Lipschitz saturation. The problem now is to determine a generating set for this semigroup, since the conditions given by the theorem are stated pointwise; that is, they must be verified for each monomial individually. The following result provides a partial answer to this problem in the case of smooth normalization.

\begin{rmk}Consider $\mathcal{N} := \cap_{i} \mathcal{N}(p_i)$ and let be $\gamma\in \mathbb{Z}^2 \cap \mathcal{N}$. Then $\gamma\in\Gamma^s$. Indeed: if an element $\gamma\in \mathbb{Z}^2 \cap \mathcal{N}$, this implies that $\gamma\in\mathcal{N}(p_i \mid i\in I)$ for every $I\subseteq \{1,\dots,n\}$. So the condition (2) of the theorem is empty.
\end{rmk}

\begin{rmk}
    As we mention before, by \cite[Theorem 3.2]{DuarteGiles2023ToricLipSat} we know that $\Ocal_{X^s,0} \simeq \C\{\Gamma^s\}$, but only in the case where $\Bar{X}$ is smooth. Let $\Gamma\subset\mathbb{Z}^d$ be a semigroup associated to this variety. With this assumption, we can suppose that $\overline{\Gamma}=\mathbb{N}^d$. In particular, this implies that $\langle e_i\rangle_{\mathbb{R}_\geq0}\cap\Gamma\neq\emptyset$ for all $i\in\{1,\ldots,d\}$, where $\{e_i\}$ are the canonical vectors of $\mathbb{R}^d$.
\end{rmk}

\begin{definition}\label{B}
    Let $\Gamma\subset\mathbb{Z}^d$ be such that $\overline{\Gamma}=\mathbb{N}^d$, and let $\{p_1,\ldots,p_n\}$ be a set of generators of $\Gamma$. For each $i\in\{1,\ldots,d\}$, we define $b_i=\min_{\gamma\in\Gamma\cap\langle e_i\rangle}\{\pi_i(\gamma) \}\in\mathbb{Z}_{>0}$ and $c_i=\max_{j\in\{1,\ldots,n\}}\{\pi_i(p_j)\}$. In addition, we define $\mathcal{B}=\{\gamma\in\mathbb{N}^d\mid \gamma< b\}$, where $b=(b_1+c_1,\ldots,b_d+c_d)\in\mathbb{Z}^d_{>0}$ and $\gamma\leq b$ if and only if $\pi_i(\gamma)<\pi_i(b)$ for all $i\in\{1,\ldots,d\}$.
\end{definition}

\begin{prop}
    Let $\Gamma\subset\mathbb{Z}^d$ be a semigroup such that $\overline{\Gamma}=\mathbb{N}^d$. Then $\Gamma^s\cap\mathcal{B}$ is a finite generator system for $\Gamma^s$, where $\mathcal{B}$ is the set defined in \ref{B}.
\end{prop}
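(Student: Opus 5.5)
We must show every $\gamma\in\Gamma^s$ can be written as a sum of elements of $\Gamma^s\cap\mathcal{B}$; finiteness of $\mathcal{B}$ is clear since it is a box in $\N^d$. The natural approach is induction on $|\gamma| = \sum_i \pi_i(\gamma)$. Let $\gamma\in\Gamma^s\setminus\mathcal{B}$. Then some coordinate satisfies $\pi_i(\gamma)\geq b_i+c_i$. Let $g_i := b_i e_i \in \Gamma\subset\Gamma^s$; this element exists by the remark that $\langle e_i\rangle_{\R_{\geq 0}}\cap\Gamma\neq\emptyset$. We aim to show $\gamma - g_i\in\Gamma^s$. Then $\gamma = (\gamma-g_i)+g_i$, and induction applies. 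We also note $g_i\in\mathcal{B}$ because $b_i<b_i+c_i$, the other coordinates being $0$ and $c_j\geq 0$ (strictly positive here, since $\overline{\Gamma}=\N^d$ forces every direction to be hit by some generator).

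The key step is to check condition 2 of Theorem \ref{Th_Main} for $\gamma' := \gamma - g_i$, working with $\pi=\mathrm{id}$ since the normalization is smooth. Let $I$ be such that $\gamma'\notin\Ncal(p_j\mid j\in I)$.

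\textbf{Claim:} $I$ contains no $j$ with $p_j\in\langle e_i\rangle$ and $\pi_i(p_j)=b_i$; more precisely, the indices $j$ with $p_j = g_i$ lie outside $I$. To see this, note that $\pi_i(\gamma')\geq c_i\geq \pi_i(p_j)$ for all $j$. So if $\gamma'\notin\Ncal(p_j \mid j\in I)$, then some coordinate $k\neq i$ has $\pi_k(\gamma')<\pi_k(p_j)$ for each $j\in I$. Since $\pi_k(\gamma') = \pi_k(\gamma)$, adding $g_i$ does not repair this, and $\gamma\notin\Ncal(p_j\mid j\in I)$. Moreover every generator supported on the $e_i$-axis lies in $\Ncal(\cdot)$-position below $\gamma'$ and so cannot be in $I$. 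By hypothesis, $\gamma\in\Gamma^s$, so condition 2 gives $\gamma\in\sum_{j\notin I}p_j\Z\cup\{0\}$.

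We can also assume $g_i$ is itself one of the generators $p_{j_0}$: we may enlarge the generating set, which does not change $\Gamma$. Then $j_0\notin I$, hence $\gamma' = \gamma - p_{j_0}\in\sum_{j\notin I}p_j\Z$. Condition 2 therefore holds for $\gamma'$, and $\gamma'\in\Gamma^s$ by Theorem \ref{Th_Main}. We should also check $\gamma'\in\N^d$, which is immediate since $\pi_i(\gamma)\geq b_i$.

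The delicate point is the claim that $\gamma'\notin\Ncal(p_j\mid j\in I)$ forces $\gamma\notin\Ncal(p_j\mid j\in I)$ as well as $j_0\notin I$. Here $\Ncal$ is convex, not a union of orthants, so the coordinatewise reasoning above is not enough. I would argue instead via Lemma \ref{Lem_NewtonPolCaracScalarProd}. There is $v\in\N^d$ with $\langle\gamma',v\rangle<\min_{j\in I}\langle p_j,v\rangle$. If $v_i>0$, then $\langle \gamma',v\rangle\geq v_i c_i$, while $\langle p_{j_0},v\rangle = v_i b_i$. To compare these and conclude $j_0\notin I$, I need $c_i\geq b_i$, which holds since $p_{j_0}$ is a generator. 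Showing $\gamma\notin\Ncal(\ldots)$ requires $\langle g_i,v\rangle$ to be small relative to the gap, and this is where the bound $\pi_i(\gamma)\geq b_i+c_i$ must be used carefully. If $v_i=0$, both claims are trivial. Making this case analysis rigorous is the main obstacle, and it may require enlarging the box slightly.
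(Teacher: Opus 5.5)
You follow the same reduction as the paper. You subtract the axis generator $b_ie_i=p_{j_0}$, which lies in $\Gamma^s\cap\mathcal{B}$, and show $j_0\notin I$ because $\gamma'\in p_{j_0}+\R^d_{\geq 0}$. Then you transfer condition~2 of Theorem \ref{Th_Main} from $\gamma$ to $\gamma'$. Your induction on $|\gamma|$ is an equivalent way of organising the paper's iteration.

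\textbf{The gap.} The step everything rests on is not proved. To invoke condition~2 for $\gamma$ you need $\gamma'\notin\Ncal(p_j\mid j\in I)\Rightarrow\gamma\notin\Ncal(p_j\mid j\in I)$. Your coordinatewise argument is invalid for a convex hull, as you note. Your proposed repair also cannot work: you keep the separating vector $v$ and try to bound $\langle g_i,v\rangle=v_ib_i$ by the gap $\min_{j\in I}\langle p_j,v\rangle-\langle\gamma',v\rangle$. That gap can be arbitrarily small compared with $v_ib_i$, however large $\pi_i(\gamma)$ is. For instance, take $d=2$, $i=1$, $p_j=(1,5)$, $\gamma'=(1,3)$, $\gamma=(2,3)$ and $v=(10,1)$. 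Then $\langle\gamma',v\rangle=13<15=\langle p_j,v\rangle<23=\langle\gamma,v\rangle$, although $\gamma\notin\Ncal(p_j)$. So enlarging the box does not help; the witness itself has to change.

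\textbf{The missing idea.} Put $v':=v-v_ie_i\in\N^d$. Since $v_i\geq 0$ and $\pi_i(\gamma')\geq c_i\geq\pi_i(p_j)$ for every $j$,
\[
\langle\gamma',v'\rangle=\langle\gamma',v\rangle-v_i\pi_i(\gamma')<\langle p_j,v\rangle-v_i\pi_i(p_j)=\langle p_j,v'\rangle\qquad (j\in I).
\]
Moreover $\langle\gamma,v'\rangle=\langle\gamma',v'\rangle$, because $\gamma-\gamma'=b_ie_i$ and $v'_i=0$. Lemma \ref{Lem_NewtonPolCaracScalarProd} then gives $\gamma\notin\Ncal(p_j\mid j\in I)$ with $\mathcal{B}$ exactly as defined; this is how the paper closes the argument.

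Equivalently, argue directly. If $\gamma=\sum_j\lambda_j(p_j+z_j)\in\Ncal(p_j\mid j\in I)$, set $y=\sum_j\lambda_jp_j$. Then $\gamma\geq y$ coordinatewise and $\pi_i(y)\leq c_i\leq\pi_i(\gamma')$. Hence $\gamma'\in y+\R^d_{\geq 0}\subseteq\Ncal(p_j\mid j\in I)$.

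\textbf{A minor point.} You do not need to enlarge the generating set, which could a priori alter $c_i$ and hence $\mathcal{B}$. The element $b_ie_i$ is irreducible in $\Gamma\subseteq\N^d$, so it already belongs to every generating set. That is also what justifies $c_i\geq b_i$.
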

\begin{proof}
    Since $\mathcal{B}\cap\mathbb{Z}^d$ is finite, it is clear that $\Gamma^s\cap\mathcal{B}$ is finite. Let $\gamma\in\Gamma^s$ be such that $\gamma\notin\mathcal{B}$, then $\pi_i(\gamma)\geq \pi_i(b)=b_i+c_i$ (see definition \ref{B}). By the definition of $b_i$, $b_i \cdot e_i\in\Gamma$, where $e_i$ is the i-th canonical vector, and $b_i\cdot e_{i_0}=p_{i_0}$. By the definition of $c_i$, we have $c_i\geq b_i$.

    Let $a_i$ be the natural number such that $b_i+c_i>\pi_i(\gamma-a_i\cdot p_{i_0})\geq c_i$ and let $\gamma'=\gamma-a_i\cdot p_{i_0}$. We are going to show that $\gamma'\in\Gamma^s$. Consider $K\subset\{1,\ldots,n\}$ such that $\gamma'\notin\mathcal{N}(p_j\mid j\in J)$. By Lemma \ref{Lem_NewtonPolCaracScalarProd}, there exists $v\in\mathbb{N}^d$ such that $\langle \gamma',v\rangle<\langle p_j,v\rangle$ for all $j\in\{1,\ldots,n\}$. Since $\pi_i(\gamma')\geq c_i\geq\pi_i(p_j)$ for all $j\in\{1,\ldots,n\}$, $\langle \gamma',v'\rangle<\langle p_j,v'\rangle$ for all $j\in\{1,\ldots,n\}$, where $v'=v-\pi_i(v)\cdot e_i$. Notice that $\pi_j(\gamma')=\pi_j(\gamma)$ for all $j\neq i$ and $\pi_i(v')=0$, this implies that $\langle \gamma,v'\rangle=\langle \gamma',v'\rangle<\langle p_j,v'\rangle$ for all $j\in\{1,\ldots,n\}$. By Lemma \ref{Lem_NewtonPolCaracScalarProd}, we obtain $\gamma\notin \mathcal{N}(p_j\mid j\in J)$. 
    
    By Theorem \ref{Th_Main}, $\gamma=\sum_{j\notin K}m_j\cdot p_j$, for some $\{m_j\}\subset\mathbb{Z}$. Since $b_i\leq c_i\leq \pi_i(\gamma')<\pi_i(\gamma)$, we have $\gamma,\gamma'\in\mathcal{N}(p_{i_0})$. Thus, $i_0\notin K$. Then $\gamma'=\gamma-a_i\cdot p_{i_0}=\sum_{j\notin K}m_j\cdot p_j-a_i\cdot p_{i_0}\in\sum_{i\notin K}\mathbb{Z}p_i\cup\{0\}$, and by Theorem \ref{Th_Main}, $\gamma'\in\Gamma^s$.

    Applying this process in an iterative way for each $\pi_j(\gamma)\geq b_i+c_i$, we obtain a new vector $\bar{\gamma}\in\Gamma^s\cap\mathcal{B}$. Defining $a_j=0$ for all $j\in \{1,\ldots,n\}$ such that $\pi_j(\gamma)\leq b_i+c_i$, we obtain $\gamma=\bar{\gamma}+\sum_{j=1}^{n}a_j\cdot p_{j_0}$. Thus $\gamma\in\langle\Gamma^s\cap\mathcal{B}\rangle_{\mathbb{Z}_{\geq 0}}$.
\end{proof}

The previous proposition provides a bound for determining a generating set of the semigroup; however, in general, such a set need not be minimal.







\bibliographystyle{abbrv}
\bibliography{Biblio.bib}

\vspace{1cm}(F. Bernard) University of Toronto, Department of Mathematics,


\hspace{2cm} \textit{Email address:} francois.bernard@utoronto.ca\\

(E. Chávez-Martínez) Instituto de Ingeniería y Tecnología, UACJ, Ciudad Juárez, México


\hspace{3.3cm} \textit{Email address:} enrique.chavez@uacj.mx\\

(A.E. Giles Flores) Universidad Autónoma de Aguascalientes, Departamento de Matem\'aticas y F\'isica. 


\hspace{2.9cm} \textit{Email address:} arturo.giles@edu.uaa.mx

\end{document}